\theoremstyle{plain}
\newtheorem{theorem}{Theorem}[section]
\newtheorem{lemma}[theorem]{Lemma}
\theoremstyle{definition}
\theoremstyle{remark}
\newtheorem{remark}[theorem]{Remark}
\newcommand{\Z}{\mathbb{Z}}
\newcommand{\R}{\mathbb{R}}
\renewcommand{\S}{\mathcal{S}}
\newcommand{\co}{\colon\thinspace}
\newcommand{\del}{\partial}
\newcommand{\hpi}{\widehat\pi}
\begin{document}

\title[Goldman bracket characterizes homeomorphisms]{The Goldman bracket characterizes homeomorphisms between non-compact surfaces}

\author{Sumanta Das}
\email{sumantadas@iisc.ac.in}

\author{Siddhartha Gadgil}
\email{gadgil@iisc.ac.in}

\author{Ajay Kumar Nair}
\email{ajaynair@iisc.ac.in}

\address{	Department of Mathematics\\
  Indian Institute of Science\\
  Bangalore 560012, India}

\begin{abstract} We show that a homotopy equivalence between two non-compact orientable surfaces is homotopic to a homeomorphism if and only if it preserves the Goldman bracket, provided our surfaces are neither the plane nor the punctured plane. \end{abstract}
\maketitle

\section{Introduction}
All manifolds are assumed to be second countable and Hausdorff. A surface is a two-dimensional manifold. Throughout this note, all surfaces will be assumed to be connected and orientable. We say a surface is of \emph{finite-type} if its fundamental group is finitely generated; otherwise, we say it is of \emph{infinite-type}.

A fundamental question in topology is whether homotopy equivalent closed $n$-manifolds are necessarily homeomorphic, and whether every homotopy equivalence is homotopic to a homeomorphism. More generally, we may ask what additional structures or conditions characterize homotopy equivalences that are homotopic to homeomorphisms. For $n\geq 3$, the answer to these questions is negative in general -- for example, the Lens spaces $L(7, 1)$ and $L(7, 2)$ are homotopy equivalent but not homeomorphic.

In the case of closed surfaces, the classical \emph{Dehn-Nielsen-Baer theorem}~\cite[Appendix]{MR0881797} says that every homotopy equivalence is homotopic to a homeomorphism. However, the corresponding result does not hold for compact two-manifolds with boundary. For example, the  torus with one hole and the sphere with three holes are homotopy equivalent but not homeomorphic.

Similarly, homotopy equivalence does not imply homeomorphism for non-compact surfaces without boundary. For example, the once-punctured torus and the thrice-punctured sphere are homotopy equivalent but not homeomorphic. Indeed up to homotopy equivalence, there is precisely one (connected) surface  of infinite type, but up to homeomorphism, there are $2^{\aleph_0}$ many infinite-type surfaces; see \cite[Proposition 3.1.11]{arxiv1}.

In this note, we show that there is a simple and natural characterization of when a homotopy equivalence $f\colon \Sigma'\to\Sigma$ between non-compact surfaces without boundary is homotopic to a homeomorphism in terms of the \emph{Goldman bracket}, which is a Lie Algebra structure associated to a surface. More precisely, if $\hpi(\Sigma)$ denotes the set of free homotopy classes of closed curves on a non-compact surface $\Sigma$, then the Goldman bracket (whose definition we recall in Section~\ref{gold-intro}) is a bilinear map
$$[\cdot,\cdot]\co \mathbb Z[\hpi(\Sigma)]\times\mathbb Z[\hpi(\Sigma)]\to\mathbb Z[\hpi(\Sigma)],$$
which is skew-symmetric and satisfies the Jacobi identity. Our main result is the following.

\begin{theorem}\label{main}
  A homotopy equivalence $f\co \Sigma'\to \Sigma$ between two non-compact oriented surfaces without boundary is homotopic to an orientation-preserving homeomorphism if and only if it commutes with the Goldman bracket, i.e., for all $x',y'\in\mathbb Z[\hpi(\Sigma')]$, we have,
  \begin{equation}\label{preserve}
    \left[f_*(x'),f_*(y')\right]=f_*\left([x',y']\right),
  \end{equation}
  where $f_*\co \mathbb Z[\hpi(\Sigma')]\to\mathbb Z[\hpi(\Sigma)]$ is the function induced by $f$, provided $\Sigma$ is not homeomorphic to the plane or the cylinder $S^1 \times \R$.
\end{theorem}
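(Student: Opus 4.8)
The plan is to prove the two implications separately. The ``only if'' direction is short: the Goldman bracket $[\alpha,\beta]$ is assembled from the transverse double points of representatives of $\alpha$ and $\beta$, the local intersection sign read off from the orientation of $\Sigma$, and the concatenations of the two loops based at those points. Hence an orientation-preserving homeomorphism $h$ satisfies $[h_*\alpha,h_*\beta]=h_*[\alpha,\beta]$ on the nose (an orientation-reversing homeomorphism would instead introduce a global sign $-1$), and since the bracket depends only on free homotopy classes, any $f$ homotopic to such an $h$ satisfies \eqref{preserve}; this direction needs no hypothesis on $\Sigma$. All the content is in the converse.

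For the converse I would first reduce to the fundamental group: non-compact surfaces without boundary are aspherical with free fundamental group, so $f$ is determined up to homotopy by the conjugacy class of the induced isomorphism $f_*\colon\pi_1(\Sigma')\to\pi_1(\Sigma)$, and it suffices to realize this isomorphism by an orientation-preserving homeomorphism. By bilinearity, \eqref{preserve} is a statement about single free homotopy classes. Two easy observations make it usable: $f_*$ commutes with the $n$-th power maps on $\hpi$ (it is induced by a group isomorphism), and the homotopy inverse of $f$ also satisfies \eqref{preserve}, so $f_*$ and $f_*^{-1}$ are mutually inverse bijections of free homotopy classes preserving both the bracket and the power maps. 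I would then invoke results relating the Goldman bracket to geometry on surfaces with punctures: the theorem of Chas that there is no cancellation in $[\gamma,\delta]$ when $\gamma$ is a primitive essential simple closed curve --- so that the number of terms of $[\gamma,\delta]$ equals the geometric intersection number $i(\gamma,\delta)$ --- and the algebraic characterization of Chas--Krongold of which primitive classes are represented by simple closed curves, in terms of the vanishing of an explicit bracket expression. From these I would deduce that $f_*$ (i) carries the primitive essential simple closed curve classes of $\Sigma'$ bijectively onto those of $\Sigma$; (ii) preserves geometric intersection numbers of such classes, in particular disjointness; and (iii) carries peripheral conjugacy classes to peripheral conjugacy classes, using that --- for surfaces whose fundamental group has rank at least $2$ --- a primitive essential simple closed curve is peripheral precisely when it has zero intersection number with every essential simple closed curve.

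Granting (i)--(iii): for $\Sigma$ of finite type, $f_*$ preserves the set of peripheral conjugacy classes, hence the number of punctures, hence the homeomorphism type of $\Sigma$ among the finitely many surfaces homotopy equivalent to $\Sigma'$; and then, by the relative version of the Dehn--Nielsen--Baer theorem for punctured surfaces, the peripheral-structure-preserving outer automorphism $f_*$ is induced by a homeomorphism. For $\Sigma$ of infinite type, $f_*$ induces an isomorphism between the curve graphs of $\Sigma'$ and $\Sigma$, which is induced by a homeomorphism by the rigidity theorem for curve graphs of infinite-type surfaces (Hern\'andez Hern\'andez--Morales--Valdez); in particular $\Sigma'\cong\Sigma$. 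In either case we obtain a homeomorphism $h\colon\Sigma'\to\Sigma$ with $h_*$ conjugate to $f_*$, so $h\simeq f$; here it is essential that $\pi_1(\Sigma)$ have rank at least $2$, the rank-$\le1$ cases being exactly the plane and the cylinder, on which the Goldman bracket vanishes identically and which are excluded by hypothesis. Finally, were $h$ orientation-reversing, combining \eqref{preserve} with $[h_*\alpha,h_*\beta]=-h_*[\alpha,\beta]$ would force $f_*[\alpha,\beta]=0$ for all $\alpha,\beta$, making the bracket of $\Sigma'\cong\Sigma$ vanish identically --- again excluded --- so $h$ is orientation-preserving.

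The hard part is the passage to (i)--(iii): turning the purely algebraic identity \eqref{preserve} into genuine statements about simple closed curves, intersection numbers, and peripheral curves rests entirely on the cancellation-free behaviour of the Goldman bracket along simple closed curves, and the low-complexity finite-type surfaces --- the thrice-punctured sphere, the once-punctured torus, and a few others, precisely the source of the classical examples of homotopy-equivalent non-homeomorphic surfaces --- need the bracket on non-simple curves, through the peripheral-class detection above, rather than curve-complex rigidity. The remaining labour is in marshalling the relative Dehn--Nielsen--Baer and infinite-type curve-graph rigidity inputs in the non-compact setting, and --- in the infinite-type case, inside that last input --- in assembling homeomorphisms defined on an exhaustion by finite-type subsurfaces into a single coherent homeomorphism.
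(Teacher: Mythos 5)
Your proposal takes a genuinely different route from the paper, and it contains gaps that would require substantial additional work.

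\textbf{What the paper actually does.} The converse is proved by constructing a \emph{proper} map $g\simeq f$ and then invoking the theorem (cited as [Theorem 1, arxiv1]) that a proper homotopy equivalence between non-compact surfaces without boundary is properly homotopic to a homeomorphism. The proper map is built inductively over a carefully chosen exhaustion of $\Sigma$ by compact subsurfaces $K_m$ with filling systems of simple closed curves $\alpha_i\subset K_m$. The only Goldman-bracket input is item~\ref{it:gold-scc} of Theorem~\ref{goldman} ($[\alpha,\beta]=0$ iff disjoint when $\alpha$ is simple), applied to show that a curve in $\Sigma'$ disjoint from all $\alpha_i'$ maps to a free homotopy class supported in $\Sigma\setminus K_m$. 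A separate amalgamated-free-product lemma upgrades this from free to based homotopy and is what lets the pieces of the proper map glue compatibly across the exhaustion. The argument is uniform in the finite-type and infinite-type cases, and at no point does it need to show $f_*$ preserves simple closed curves, compute intersection numbers, or identify $\Sigma'$ with $\Sigma$ in advance.

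\textbf{Where your route has real gaps.} You replace this with: show $f_*$ preserves simple closed curves (Chas--Krongold), their intersection numbers (Chas' no-cancellation), and peripherality, then invoke relative Dehn--Nielsen--Baer for finite type or curve-graph rigidity for infinite type. Three concrete issues. (1) The Chas--Krongold characterization of simple closed curves and Chas' no-cancellation theorem are stated for compact hyperbolic surfaces with boundary; you would need to verify they transfer to every non-compact surface without boundary (including surfaces with planar or cylindrical ends, where a simple closed curve may have several non-homeomorphic complementary pieces), and that they detect \emph{essential, non-peripheral} simple curves in the sense you need. This is plausible because every simple closed curve sits inside a compact subsurface, but it is not automatic and you do not address it. (2) For infinite type, curve-graph rigidity hands you a homeomorphism $h$ inducing the same map on vertices of the curve graph as $f_*$; it does \emph{not} hand you $h_*$ conjugate to $f_*$. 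Passing from ``agree on isotopy classes of simple closed curves'' to ``induce conjugate maps on $\pi_1$'' is an Alexander-method statement, which for infinite-type surfaces is a genuine additional theorem (and an exhaustion/limit argument) that your outline omits entirely; without it the final line ``$h\simeq f$'' is unjustified. (3) Curve-graph rigidity for infinite-type surfaces has low-complexity exceptional cases (just as the finite-type curve complex does), and you do not rule these out. In effect, your plan trades the single external input [arxiv1, Theorem 1] for four heavier ones (Chas, Chas--Krongold, relative DNB, infinite-type curve-graph rigidity plus an infinite-type Alexander method), while still leaving the crucial infinite-type assembly step --- the very thing the paper's exhaustion-based construction is designed to handle --- unaddressed. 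The ``only if'' direction and the final orientation argument in your write-up do match the paper's.
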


The analogous result for compact, connected, oriented two-dimensional manifolds with boundary was proved in~\cite{MR2861998}.

Our methods are based on the relation between the Goldman bracket and the \emph{geometric intersection number} of curves on a surface. The same methods prove a related characterization in terms of intersection numbers for homotopy equivalences.

Recall that, for a $2$-manifold $M$ with or without boundary, the geometric intersection number $I_M$ is defined as follows. Let $x, y\in \widehat\pi(M)$. Suppose $\alpha: \S^1\to M$ and $\beta: \S^1\to M$ are smooth immersions representing $x$ and $y$, respectively. We say that $\alpha$ and $\beta$ are in \emph{general position} if they intersect transversally so that if $p \in \alpha(\S^1)\cap \beta(\S^1)$, then $\alpha^{-1}(p)$ and $\beta^{-1}(p)$ are both singletons. We sometimes simply say $\alpha$ and $\beta$ intersect in double points.

We define the geometric intersection number as
$$I_M(x,y)\coloneqq \min\left\{|\alpha\cap \beta|: \text{$\alpha\in x$, $\beta\in y$, $\alpha$ and $\beta$ in general position}\right\}.$$

\begin{theorem}\label{intersection}
  Let $f\co \Sigma'\to \Sigma$ be a homotopy equivalence between two non-compact surfaces without boundary, where $\Sigma$ is not homeomorphic to the plane or a cylinder. Then the following are equivalent:
  \begin{enumerate}
    \item\label{it:homeo} $f$ is homotopic to a homeomorphism.
    \item\label{it:inter}  $I_\Sigma\left(f_*(x'), f_*(y')\right)=I_{\Sigma'}(x', y')$ for all $x', y'\in \widehat\pi(\Sigma')$.
    \item\label{it:zero-inter} For all $x', y'\in \widehat\pi(\Sigma')$, $I_\Sigma\left(f_*(x'), f_*(y')\right)=0 \iff I_{\Sigma'}(x', y')=0$.
  \end{enumerate}
\end{theorem}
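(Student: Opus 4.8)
The plan is to establish the cycle of implications $\eqref{it:homeo}\Rightarrow\eqref{it:inter}\Rightarrow\eqref{it:zero-inter}\Rightarrow\eqref{it:homeo}$. The first two are routine. If $f$ is homotopic to a homeomorphism $h$, then $f_*=h_*$ on free homotopy classes; pushing a transverse pair of curves realizing $I_{\Sigma'}(x',y')$ forward by $h$ shows $I_\Sigma(h_*x',h_*y')\le I_{\Sigma'}(x',y')$, and applying the same to $h^{-1}$ gives equality, which is \eqref{it:inter}; and \eqref{it:inter}$\Rightarrow$\eqref{it:zero-inter} is immediate. All of the work lies in \eqref{it:zero-inter}$\Rightarrow$\eqref{it:homeo}, and the strategy there is to bootstrap from the known compact case.

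Before that I would record some preliminaries. Fix a homotopy inverse $g\co\Sigma\to\Sigma'$; since $f_*$ and $g_*$ are mutually inverse bijections of free homotopy classes and geometric intersection number is a homotopy invariant, $g$ also satisfies \eqref{it:zero-inter}. Moreover $f_*\co\hpi(\Sigma')\to\hpi(\Sigma)$ is induced, up to conjugation, by an isomorphism of the free groups $\pi_1(\Sigma')\cong\pi_1(\Sigma)$, so it carries primitive conjugacy classes to primitive conjugacy classes and commutes with taking powers. Since a nontrivial free homotopy class $x$ is represented by a simple closed curve exactly when $I(x,x)=0$, property \eqref{it:zero-inter} then shows that $f$ sends simple closed curves to simple closed curves and multicurves to multicurves, preserving disjointness and non-isotopy of the components. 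Finally I would fix compact exhaustions $\Sigma'=\bigcup_n S'_n$ and $\Sigma=\bigcup_n S_n$ by incompressible subsurfaces whose boundary components are essential and pairwise non-isotopic.

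The key step is then a stagewise analysis. For each $n$, the classes of the boundary components of $S'_n$ form a multicurve in $\Sigma'$, so by the previous paragraph their $f_*$-images form a multicurve $N_n$ in $\Sigma$, and \eqref{it:zero-inter} forces the finitely generated subgroup $f_*(\pi_1 S'_n)\le\pi_1(\Sigma)$ to be carried by a compact incompressible subsurface $Q_n\subseteq\Sigma$ cobounded by a subcollection of $N_n$; thus $f$ corestricts, up to homotopy, to a homotopy equivalence $\bar f_n\co S'_n\to Q_n$ of compact surfaces. Because $S'_n$ is incompressible in $\Sigma'$ and $Q_n$ is incompressible in $\Sigma$, geometric intersection numbers of curves supported in them are the same whether computed inside the subsurface or in the ambient surface, so $\bar f_n$ again satisfies \eqref{it:zero-inter}. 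The compact analogue of the present theorem, which follows from the methods of \cite{MR2861998}, then gives that $\bar f_n$ is homotopic to a homeomorphism; in particular $S'_n\cong Q_n$.

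It remains to assemble these stagewise homeomorphisms into a homeomorphism $\Sigma'\to\Sigma$ homotopic to $f$. Carrying out the same analysis for the homotopy inverse $g$ and interleaving the two exhaustions yields, after passing to subsequences and applying homotopies, a ``sandwich'' in which $f(S'_n)$ and $g(S_n)$ sit in prescribed later stages; this lets one take the subsurfaces $Q_n$ nested, compatibly with $S'_n\hookrightarrow S'_{n+1}$, and exhausting $\Sigma$. Using that homotopic incompressible embeddings of a compact surface into a surface are ambient isotopic, the homeomorphisms $S'_n\to Q_n$ can be replaced by a strictly compatible system whose direct limit is the desired homeomorphism. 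I expect this assembly to be the main obstacle: one must control the $Q_n$ and the homeomorphisms simultaneously --- via the $f$/$g$ sandwich, repeated isotopies, and uniqueness up to isotopy of incompressible subsurfaces realizing a prescribed subgroup --- and then verify that the resulting limit map is proper and bijective. The exclusion of the plane and the cylinder enters exactly here, guaranteeing that $\Sigma$ carries enough essential curves for this exhaustion scheme to be nonvacuous. An alternative to invoking the compact intersection-number result would be to route the argument through the Goldman bracket and Theorem~\ref{main}, since \eqref{it:zero-inter} constrains which brackets can vanish; but the reduction above seems the most direct.
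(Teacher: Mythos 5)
Your cycle of implications is fine, and \eqref{it:homeo}$\Rightarrow$\eqref{it:inter}$\Rightarrow$\eqref{it:zero-inter} matches the paper's (trivial) argument. For \eqref{it:zero-inter}$\Rightarrow$\eqref{it:homeo}, however, your route diverges substantially from the paper's, and it contains gaps that you partly flag but do not close. The paper's argument is very short: it observes that in the proof of Theorem~\ref{main}, the Goldman bracket hypothesis is used in exactly two places (Lemma~\ref{loop_outside} and Lemma~\ref{pi_component_map}), and in both it is used only to pass from ``$\gamma'$ is homotopic to a curve disjoint from a simple closed curve $\alpha'$'' to ``$f_*(\gamma')$ is homotopic to a curve disjoint from $f_*(\alpha')$''. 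This is precisely what hypothesis~\eqref{it:zero-inter} supplies, so the entire proof of Theorem~\ref{main} runs verbatim, producing a proper map homotopic to $f$, and then \cite[Theorem 1]{arxiv1} upgrades it to a homeomorphism. Your last sentence suggests this route as an ``alternative,'' but it is in fact the more direct one given what is already in the paper.

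Your approach instead aims to produce compact-subsurface homotopy equivalences $\bar f_n\co S'_n\to Q_n$, apply a compact-case theorem at each stage, and then assemble the stagewise homeomorphisms. Two steps carry unaddressed weight. First, the assertion that hypothesis~\eqref{it:zero-inter} ``forces $f_*(\pi_1 S'_n)$ to be carried by a compact incompressible subsurface $Q_n$ cobounded by a subcollection of $N_n$,'' and that $\bar f_n$ is then a homotopy equivalence satisfying~\eqref{it:zero-inter} internally, is essentially the content of the paper's Lemmas~\ref{loop_outside} and~\ref{pi_component_map} plus the amalgamated-free-product argument of Lemma~\ref{amalgamated_free_product}; you state it as an observation, but it requires exactly that work (control of based, not just free, homotopy classes across exhaustion boundaries is the delicate point). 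Second, and more seriously, the assembly step --- making the $Q_n$ nested, exhausting, and the homeomorphisms strictly compatible so that the limit is a proper bijection --- is the genuine hard part, and you yourself call it ``the main obstacle'' without resolving it. This is essentially the content of \cite[Theorem 1]{arxiv1}, which the paper invokes as a black box after constructing a proper map; you would need to reprove something of comparable depth. Finally, citing \cite{MR2861998} ``and its methods'' for an intersection-number version of the compact-with-boundary theorem is loose: that reference is stated for the Goldman bracket, and extracting the intersection-number statement for compact surfaces with boundary is itself something one would have to check rather than take as given.
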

\subsection{Outline of the proof of Theorem~\ref{main}}\label{outline}

As orientation-preserving homeomorphisms preserve the Goldman bracket by definition, it is easy to see that if $f\co \Sigma' \to \Sigma$ is homotopic to an orientation-preserving homeomorphism, then it commutes with the Goldman bracket.

Conversely, suppose $f\co \Sigma' \to \Sigma$ commutes with the Goldman bracket, we construct a \emph{proper} map $g\co  \Sigma'\to \Sigma$ such that $f$ is homotopic to $g$. By~\cite[Theorem 1]{arxiv1}, $g$ is in turn (properly) homotopic to a homeomorphism.

To construct $g$, we pick an exhaustion $K_1 \subset K_2\subset \dots \subset K_n\subset \dots$ of $\Sigma$ (see Section~\ref{exhaustions}) by compact subsurfaces (in this outline, we suppress a few technical conditions on exhaustions and base-points). We construct (as we sketch below) an exhaustion $K'_1 \subset K'_2\subset \dots \subset K'_n\subset \dots$ of $\Sigma'$ by compact subsurfaces together with maps $g_i \co  K'_i \to \Sigma$ homotopic to the restrictions $f\vert_{K_i'}$ such that for all $i$, $g_{i+1}$ is an extension of $g_i$, i.e., $g_{i+1}\vert_{K_i'} = g_i$. Furthermore, we ensure that $g_n(K_n' \setminus K_i')\subset \Sigma\setminus K_i$ whenever $i\leq n$.

To construct $K_1'$ and the map $g_1\co K_1' \to \Sigma$, we consider a system of simple closed curves $\alpha_1$, $\alpha_2$, \dots, $\alpha_{k_1}$ that \emph{fill} $K_1$ (see Section~\ref{filling}). Thus, if $\gamma$ is a closed curve that can be homotoped to be disjoint from all $\alpha_i$, then $\gamma$ is homotopic to a closed curve in $\Sigma\setminus K_1$. As $f$ is a homotopy equivalence, there exist closed curves $\alpha'_1$, $\alpha'_2$, \dots, $\alpha'_{k_1}$ such that $f_*(\alpha'_i)$ is homotopic to $\alpha_i$ for all $i$. We pick a compact subsurface $K_1'$ of $\Sigma'$ that contains all the curves $\alpha_i'$.

The key observation is that for any curve $\gamma'\subset \Sigma'\setminus K_1'$, $f_*(\gamma')$ is homotopic to a curve in $\Sigma\setminus K_1$. This follows from the properties of the Goldman bracket. Namely, the Goldman bracket of $\gamma'$ with each $\alpha_i'$ is zero, and hence the Goldman bracket of $f_*(\gamma')$ with each $\alpha_i$ is zero. As the $\alpha_i$ are simple closed curves, by a theorem of Goldman, it follows that, for each $i$, $f_*(\gamma')$ is homotopic to a curve disjoint from $\alpha_i$. As the $\alpha_i$ fill $K_1$, it follows that $f_*(\gamma')$ is homotopic to a curve in $\Sigma\setminus K_1$.

As the boundary components of $K_1'$ are homotopic to curves in $\Sigma'\setminus K_1'$, this lets us define a map $g_1\co K_1' \to \Sigma$ that is homotopic to $f\vert_{K_1'}$ and so that $g_1(\del K_1') \subset \Sigma\setminus K_1$. To proceed inductively, we prove a refinement of the above key observation. Namely, for each component $V'$ of $\overline{\Sigma'\setminus K_1'}$, there is a component $V$ of $\overline{\Sigma\setminus K_1}$ such that for any closed curve $\gamma'\subset V'$, $f_*(\gamma')$ is homotopic to a curve in $V$. This lets us make a construction similar to the above for each component $V'$ of $\overline{\Sigma'\setminus K_1'}$ to obtain a subsurface in $V$ and an extension of the map to this subsurface. Taking the union of these subsurfaces, we get $K_2'$ and an extension $g_2$ of $g_1$ to $K_2'$ so that $g_2(\del K'_2)\subset \Sigma\setminus K_2$. We proceed inductively to obtain $K'_n$ and maps $g_n: K'_n \to \Sigma$ with $g_n(\del K'_n)\subset \Sigma\setminus K_n$ for all $n$.

Finally, the limit of the maps $g_i$ gives us a proper map $g\co  \Sigma'\to \Sigma$ homotopic to $f$ as each $g_i \co  K'_i \to \Sigma$ homotopic to the restriction $f\vert_{K_i'}$.

\section{Background}

A \emph{surface} $\Sigma$ is a connected, orientable two-dimensional manifold with or without boundary.

\subsection{Subsurfaces}

A (compact) \emph{subsurface} $F$ of a surface $\Sigma$ is an embedded submanifold (in general with boundary) of codimension zero. Thus, $F\subset\Sigma$ is a subset homeomorphic to a compact surface with boundary so that $\del F\subset \Sigma$ is a collection of disjoint simple closed curves. Further, if $F$ is connected, closure of a component of $\Sigma\setminus \del F$  is $F$.

We assume that all subsurfaces are connected.

\subsection{Exhaustions}\label{exhaustions}

An \emph{exhaustion} of a non-compact surface $\Sigma$ is a collection of subsurfaces $K_i$, $i=1, 2,\dots$ such that
\begin{itemize}
  \item $K_i\subset \mathrm{int}(K_{i+1})$ for all $i$,
  \item $\bigcup_i K_i = \Sigma$.
\end{itemize}

\subsection{Intersections and geodesics}\label{S:int-geods}

A very useful classical result on surfaces relates pairwise and mutual intersections. This can be conveniently formulated in terms of hyperbolic metrics on surfaces (one can define this purely topologically, but the definition is a bit more complicated).

Recall that except sphere and torus, every compact surface $\Sigma$ without boundary admits a complete hyperbolic metric. Similarly, every compact surface $\Sigma$ with boundary admits a hyperbolic metric with geodesic boundary if it has negative Euler characteristic. Assume that such a metric has been fixed. Then the following holds.

\begin{lemma}\label{int-geods}
  Let $\Sigma$ be a surface with a hyperbolic metric
  \begin{enumerate}
    \item Every non-trivial homotopy class of curves in $\Sigma$ contains a unique geodesic representative.
    \item Every homotopically non-trivial simple closed curve $\alpha$ in $\Sigma$ is ambient isotopic to a simple geodesic.
    \item If two homotopy classes of curves in $\Sigma$ have representatives that are disjoint, then their geodesic representatives are disjoint.
  \end{enumerate}
\end{lemma}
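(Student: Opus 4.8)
The plan is to work in the universal cover and translate everything into the language of Fuchsian groups. Set $\Gamma=\pi_1(\Sigma)$, acting by deck transformations on the universal cover $\widetilde\Sigma$ via the covering map $p\co\widetilde\Sigma\to\Sigma$. When $\del\Sigma=\emptyset$ the cover $\widetilde\Sigma$ is isometric to the hyperbolic plane $\H$ and $\Gamma$ is a discrete, torsion-free group of orientation-preserving isometries; when $\Sigma$ is compact with geodesic boundary one instead lets $\widetilde\Sigma$ be the convex region cut out in $\H$ by the lifts of $\del\Sigma$ (equivalently, one doubles $\Sigma$ across its geodesic boundary and argues in the resulting boundaryless surface). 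In either case every non-trivial $\gamma\in\Gamma$ acts freely and is hyperbolic, with a unique invariant geodesic $A_\gamma\subset\H$ (its axis), unless $\gamma$ is peripheral about a cusp, in which case it is parabolic and admits no closed geodesic; this last degeneracy does not occur for compact surfaces with geodesic boundary and plays no role in the applications, so I set it aside. The one recurring tool is the standard fact that a lift $\widetilde c$ of a closed curve $c$ which is invariant under the corresponding deck transformation $\gamma$ lies within bounded Hausdorff distance of $A_\gamma$ (because $\langle\gamma\rangle\backslash\widetilde c$ is compact), and hence has the same pair of endpoints on $\del\H$ as $A_\gamma$.

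For (1), existence is immediate: $p(A_\gamma)$ is a closed geodesic in the given free homotopy class, where convexity of $\widetilde\Sigma$ guarantees $A_\gamma\subset\widetilde\Sigma$ in the bounded case. For uniqueness, any closed geodesic in the class lifts to a complete $\gamma$-invariant geodesic of $\H$; since a hyperbolic isometry has exactly one invariant geodesic, this lift is $A_\gamma$, and the geodesic is $p(A_\gamma)$.

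For (2), I first show that the geodesic representative $\alpha^{\ast}$ of an essential simple closed curve $\alpha$ is embedded. If $\alpha^{\ast}$ had a transverse self-intersection, then two distinct lifts of $\alpha^{\ast}$ would cross transversally in $\H$, hence, being distinct geodesics, their endpoint pairs would link on $\del\H$. By the tool above these endpoint pairs coincide with those of the corresponding $\Gamma$-translates of the $\langle\gamma\rangle$-invariant lift $\widetilde\alpha$ of $\alpha$. But $\widetilde\alpha$ is a properly embedded line ($\alpha$ being simple), so it separates $\overline\H$, and linking of the endpoints forces that translate of $\widetilde\alpha$ to meet $\widetilde\alpha$, contradicting that the lifts of the simple curve $\alpha$ are pairwise disjoint. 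As an essential simple closed curve represents a primitive element of $\Gamma$, $\alpha^{\ast}$ is also not a multiple cover, so it is a simple closed geodesic. Finally $\alpha$ and $\alpha^{\ast}$ are freely homotopic essential simple closed curves, so by Epstein's theorem they are isotopic, and the isotopy extension theorem upgrades this to an ambient isotopy.

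For (3), take disjoint representatives $\alpha_1,\alpha_2$ of classes $c_1,c_2$. In the simple case, which is the only one needed in the sequel, the lifting picture settles it at once: were $\alpha_1^{\ast}$ and $\alpha_2^{\ast}$ to intersect, two of their lifts (axes) would cross transversally in $\H$, hence have linking endpoint pairs on $\del\H$; but those pairs coincide with the endpoints of suitable lifts of $\alpha_1$ and $\alpha_2$, which, the former being an embedded separating line, would then be forced to meet, contradicting $\alpha_1\cap\alpha_2=\emptyset$. For arbitrary classes one appeals to the classical fact that geodesic representatives realize the geometric intersection number, or, equivalently, to the bigon criterion together with the observation that a hyperbolic surface admits no geodesic bigon (such a bigon would lift to one in $\H$ bounded by two geodesic segments with common endpoints, which is impossible). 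All three statements are classical (see, e.g., the treatments of Casson--Bleiler, Buser, or Farb--Margalit); the only points demanding care are the uniform book-keeping for the compact-with-boundary case, handled by passing to the double, and the parabolic/cusp caveat in (1). I anticipate no substantive obstacle beyond keeping these two cases straight.
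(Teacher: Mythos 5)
The paper states this lemma without proof, presenting it as a recalled classical fact, so there is no in-paper argument to compare against. Your sketch is the standard Fuchsian-group argument via the universal cover and is essentially correct: existence and uniqueness in (1) come from the axis of the hyperbolic isometry associated to a free homotopy class; simplicity in (2) follows from the linking-at-infinity argument together with Epstein's theorem (homotopic essential simple closed curves on a surface are isotopic) and the isotopy extension theorem; and disjointness in (3) follows from the same linking argument when at least one curve is simple — the case the paper actually uses — or, in general, from the fact that realizing both classes by their geodesic representatives realizes the geometric intersection number, since there are no geodesic bigons in $\H$. The cusp caveat you flag in (1) is a genuine imprecision in the statement as written: a loop encircling a cusp is parabolic and admits no closed geodesic representative. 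It is harmless in context, however, since the paper applies the lemma to curves in compact subsurfaces equipped with hyperbolic metrics with geodesic boundary (where no cusps occur), and for the ambient non-compact surface one may always choose a cusp-free complete hyperbolic metric.
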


It follows that for a finite collection of homotopy classes of curves $x_1, x_2, \dots, x_n$ in $\Sigma$, if every pair $x_i$ and $x_j$ have disjoint representatives, then there exists a family of mutually disjoint representatives of $x_1, x_2, \dots, x_n$.

In particular, if $F\subset \Sigma$ is a subsurface, then the geodesics homotopic to the components of $\del F$ are pairwise disjoint, and the closure of a component of $\Sigma\setminus \del F$ is a hyperbolic surface with geodesic boundary which is isotopic to $F$. We can thus assume that subsurfaces have geodesic boundary.

\subsection{Filling curves}\label{filling}

Let $F$ be a compact surface, possibly with boundary. A homotopically non-trivial closed curve $\alpha\subset F$ is said to be \emph{peripheral} if $\alpha$ is (freely) homotopic to a curve with the image in $\del F$.

Fix a hyperbolic metric on $F$ with geodesic boundary. We say that a collection of geodesics $\alpha_1$, $\alpha_2$, \dots, $\alpha_n$ in $F$ \emph{fills} $F$ if every component of $F\setminus\bigcup_i\left(\alpha_i\right)$ is either an open disc or a punctured disc with boundary contained in $\del F$. By the results recalled in Lemma~\ref{int-geods}, if a homotopically non-trivial curve $\gamma$ is homotopic to curves disjoint from each $\alpha_i$, $1\leq i\leq n$, then $\gamma$ is peripheral.

Similarly, if $F\subset \Sigma$ is a compact subsurface with geodesic boundary and the geodesics $\alpha_1$, $\alpha_2$, \dots, $\alpha_n$ in $F$ fill $F$, then every homotopically non-trivial curve $\gamma$ in $\Sigma$ that is disjoint from each $\alpha_i$ is homotopic to a curve in $\Sigma\setminus \textrm{int}(F)$, and hence to a curve in $\Sigma\setminus F$.

\subsection{Splitting surfaces}\label{splitting}

Let $\Sigma$ be a surface, and $\gamma$ be a simple closed curve on $\Sigma$ so that $\Sigma\setminus \gamma$ has two components. Observe that a curve $\alpha$ in a component $V$ of $\Sigma\setminus \gamma$ is  peripheral in $V$ if it is homotopic in $\Sigma$ to a curve in a regular neighborhood of $\gamma$.

Using the results recalled in Lemma~\ref{int-geods}, we deduce the following.

\begin{lemma}\label{split}
  Let $\alpha\subset \Sigma$ be a curve that is disjoint from $\gamma$ and is not peripheral. If $\beta\subset \Sigma\setminus\gamma$ is a closed curve that is homotopic to $\alpha$ in $\Sigma$, then $\alpha$ and $\beta$ are contained in the same component $V$ of $\Sigma\setminus \gamma$ and are homotopic in $V$.
\end{lemma}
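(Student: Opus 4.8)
The plan is to work with geodesic representatives throughout and exploit the fact that a separating geodesic cannot be crossed by a disjoint geodesic. First I would fix the hyperbolic metric on $\Sigma$ and isotope $\gamma$ to be a simple geodesic (Lemma~\ref{int-geods}(2)); since $\Sigma\sm\gamma$ has two components, the two sides of $\gamma$ are well-defined subsurfaces $V_1$ and $V_2$ with $V_1\cap V_2=\gamma$. Because $\alpha$ is disjoint from $\gamma$ and not peripheral, it is homotopically non-trivial, so it has a unique geodesic representative $\tg_\alpha$; moreover $\tg_\alpha$ is disjoint from $\gamma$ by Lemma~\ref{int-geods}(3) (applied to the disjoint pair $\alpha$, $\gamma$), and it is not homotopic into a neighborhood of $\gamma$ since $\alpha$ is non-peripheral. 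Hence $\tg_\alpha$ lies entirely in the interior of exactly one component, say $V_1$. Likewise $\beta$ is homotopic to $\alpha$, so it is homotopically non-trivial with the \emph{same} geodesic representative $\tg_\alpha$; since $\beta\subset \Sigma\sm\gamma$ is already disjoint from $\gamma$, the straight-line homotopy from $\beta$ to $\tg_\alpha$ in the universal cover projects to a homotopy in $\Sigma$, and I must check it stays off $\gamma$.

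The key point is that this homotopy takes place within a single complementary component. Consider a lift picture: in $\H^2$, choose a lift $\widetilde\gamma$ of $\gamma$; the components of $\H^2\sm\pi^{-1}(\gamma)$ map to the two sides, and a lift of $\tg_\alpha$ lies in a single such component $\widetilde V$ (the one covering $V_1$), since $\tg_\alpha\subset \mathrm{int}(V_1)$. A lift of $\beta$ based appropriately is homotopic (rel nothing, freely, but the free homotopy lifts to a homotopy between corresponding lifts once we fix the common deck transformation generating the cyclic stabilizer) to that lift of $\tg_\alpha$; as $\beta$ is disjoint from $\gamma$ its lift lies in a component of $\H^2\sm\pi^{-1}(\gamma)$, and since it shares an endpoint pair on $\del\H^2$ with the lift of $\tg_\alpha$ — equivalently is stabilized by the same hyperbolic element — it lies in the \emph{same} component $\widetilde V$. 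Therefore $\beta$ and $\tg_\alpha$ cobound an annulus in $\H^2$ contained in $\widetilde V$ (the region between the two disjoint geodesics/curves with common endpoints), which projects into $V_1$; in particular $\beta\subset V_1$, so $\alpha$ and $\beta$ lie in the same component $V:=V_1$, and the annulus gives a homotopy from $\beta$ to $\tg_\alpha$ inside $V$. Concatenating with the straight-line homotopy from $\alpha$ to $\tg_\alpha$ (also inside $V$, since $\tg_\alpha$ and $\alpha$ are freely homotopic and $\alpha\subset \Sigma\sm\gamma$ forces its lift into $\widetilde V$ by the same endpoint argument), we conclude $\alpha$ and $\beta$ are homotopic in $V$.

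I expect the main obstacle to be the bookkeeping with lifts and free versus based homotopies: one must make sure that "homotopic in $\Sigma$" is promoted to a homotopy between the right lifts so that the endpoint-at-infinity argument applies, and that "disjoint from $\gamma$" genuinely confines a lift to one component of $\H^2\sm\pi^{-1}(\gamma)$ rather than allowing it to touch $\pi^{-1}(\gamma)$ tangentially (ruled out because $\beta$ and the geodesics are disjoint from $\gamma$ as subsets, and a curve homotopic to a non-peripheral curve cannot be freely homotopic to one meeting $\gamma$ without the geodesics crossing, contradicting Lemma~\ref{int-geods}(3)). Once the configuration of lifts in the two half-planes is pinned down, the rest is the standard fact that two disjoint properly embedded arcs/lines in $\H^2$ with the same pair of endpoints bound a strip, and the projection of that strip realizes the homotopy within the prescribed component.
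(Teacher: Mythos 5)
The paper states Lemma~\ref{split} without proof, saying only that it follows from the facts recalled in Lemma~\ref{int-geods}, so there is no argument to compare yours against line-by-line. Your proposal is a correct way to realize that deduction, and it uses exactly the tool the paper points to (uniqueness of geodesic representatives, disjointness of geodesic representatives of disjoint curves, isotoping $\gamma$ to a simple geodesic). That said, a couple of points you gesture at deserve to be made explicit, and there is a slightly lighter route that avoids the universal-cover bookkeeping entirely.

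On the gaps you flag: the step ``shares an endpoint pair on $\del\H^2$ with the lift of $\widetilde{\gamma}_\alpha$, so it lies in the same component $\widetilde V$'' does need an argument. The point is that, because $\gamma$ is simple, the lifts of $\gamma$ are pairwise disjoint geodesics with pairwise disjoint endpoint sets, and because $\alpha$ is non-peripheral, the axis endpoints $\{p,q\}$ of $g$ are not endpoints of any lift of $\gamma$. If the lift of $\beta$ and the lift of $\widetilde{\gamma}_\alpha$ were in different components of $\H^2\setminus\pi^{-1}(\gamma)$, some lift $\widetilde\gamma_0$ of $\gamma$ would separate them, and since both curves limit to $\{p,q\}$ on $\del\H^2$ this would force $\{p,q\}\subset\del\widetilde\gamma_0$, a contradiction. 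Likewise, the claim that the strip projects into $V$ uses that each component of $\H^2\setminus\pi^{-1}(\gamma)$ is convex (an intersection of open half-planes); then the nearest-point projection from the $g$-invariant lift of $\beta$ onto the axis of $g$ is $g$-equivariant and stays in $\widetilde V$, giving the desired free homotopy in $V$ without worrying about whether the two lifts are disjoint.

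A cleaner route that stays downstairs: $\beta$ is disjoint from $\gamma$, so it lies in one component $V_j$ of $\Sigma\setminus\gamma$, and it is non-peripheral there (else $\alpha$ would be peripheral). Endow $V_j$ with the restricted hyperbolic metric; its boundary $\gamma$ is totally geodesic, so the geodesic representative $\delta_\beta$ of $\beta$ in $V_j$ lies in $\mathrm{int}(V_j)$ and is also a closed geodesic of $\Sigma$. By uniqueness of geodesic representatives in $\Sigma$, $\delta_\beta$ equals the geodesic representative $\widetilde{\gamma}_\alpha$ of $\alpha$. The identical argument places $\alpha$ in a component $V_i$ with $\widetilde{\gamma}_\alpha\subset\mathrm{int}(V_i)$, so $i=j$, and both $\alpha$ and $\beta$ are homotopic to $\widetilde{\gamma}_\alpha$ inside $V:=V_i$. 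This buys you the same conclusion with no lifting or endpoint-at-infinity analysis, at the cost of invoking that a closed geodesic of a totally geodesic subsurface is a closed geodesic of the ambient surface.
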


\subsection{Goldman bracket}\label{gold-intro}
Let $\Sigma$ be a non-compact surface, and let $\widehat\pi(\Sigma)= [\mathbb S^1,\Sigma]$, i.e., $\widehat\pi(\Sigma)$ denotes the set of free homotopy classes of curves in $\Sigma$. Note that if $p\in \Sigma$, there is a bijection between the set of all conjugacy classes of $\pi_1(\Sigma,p)$ and $\widehat\pi(\Sigma)$. For a closed curve $\alpha$, let $\widehat{\alpha}$ denote the free homotopy class of $\alpha$.

Let $x,y\in \widehat\pi(\Sigma)$ and $\alpha, \beta$ be smoothly immersed, oriented representatives of $x$ and $y$, respectively, such that  $\alpha$ and $\beta$ intersect transversally in (finitely many) double points. Given any point $p\in \alpha\cap \beta$, we view $\alpha$ and $\beta$ as curves in $\Sigma$ based at $p$ and define $\alpha *_p \beta$ as the product of based curves as in the definition of the fundamental group.

We associate to each intersection point $p$ a \emph{sign} $\varepsilon_p =\pm 1$ as in the definition of algebraic intersection number. Namely, parametrizing $\alpha$ and $\beta$ so that $\alpha(0)=\beta(0)=p$, if the ordered pair $(\alpha'(0), \beta'(0))$ represents the orientation of $\Sigma$ at $p$, then  $\varepsilon_p = 1$, otherwise  $\varepsilon_p = -1$.

The Goldman bracket~\cite{MR0846929} $[\cdot,\cdot]\colon\mathbb Z\big[\widehat\pi(\Sigma)\big]\times\mathbb Z\big[\widehat\pi(\Sigma)\big]\to \mathbb Z\big[\widehat\pi(\Sigma)\big]$ is defined as
\begin{equation}
  [x,y]=\sum_{p\in \alpha\cap \beta}\varepsilon_p\cdot  \widehat{\alpha*_p \beta}.  \label{Goldman}
\end{equation}

Recall that for a non-empty set $S$, $\mathbb Z[S]$ is the free module generated by $S$ over the ring of integers $\mathbb Z$. Also, note that the sum in equation~\ref{Goldman} is a finite sum.

\begin{theorem}[Goldman~{\cite{MR0846929}}]\label{goldman}
  The Goldman bracket satisfies the following properties:

  \begin{enumerate}
    \item $[\cdot,\cdot]\colon\mathbb Z\big[\widehat\pi(\Sigma)\big]\times\mathbb Z\big[\widehat\pi(\Sigma)\big]\to \mathbb Z\big[\widehat\pi(\Sigma)\big]$ is a well-defined bilinear map; see \cite[Theorem 5.2.]{MR0846929}.
    \item The bracket is skew-symmetric and satisfies the Jacobi identity, i.e.,
          \begin{enumerate}
            \item $[x,y]=-[y,x]$ for all $x,y\in \widehat\pi(\Sigma)$, and
            \item $\big[x,[y,z]\big]+ \big[y, [z,x]\big]+ \big[z, [x,y]\big]=0$ for all $x,y,z\in \widehat\pi(\Sigma)$; see \cite[Theorem 5.3.]{MR0846929}.
          \end{enumerate}
          \item\label{it:gold-scc} If $\alpha$ is a simple closed curve and $x= \widehat{\alpha}$ is the homotopy class of $\alpha$, then, for $y\in \widehat \pi(\Sigma)$, $[x, y] = 0$ if and
          only if $y=\widehat{\beta}$ for a closed curve $\beta$ such that $\beta\cap \alpha=\varnothing$; see \cite[Theorem 5.17. (i)]{MR0846929}.
  \end{enumerate} \label{GoldmanBracket}
\end{theorem}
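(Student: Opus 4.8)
This is Goldman's theorem, so a proof amounts to reconstructing the arguments of~\cite{MR0846929}; here is how I would organise them, and where the difficulty lies. Bilinearity in part~(1) is automatic, since~\eqref{Goldman} is defined on the basis $\widehat\pi(\Sigma)$ of $\mathbb Z[\widehat\pi(\Sigma)]$ and extended $\mathbb Z$-bilinearly; the content is that the signed sum on the right of~\eqref{Goldman} is independent of the transverse immersed representatives $\alpha\in x$, $\beta\in y$. I would prove this by a genericity argument. Any two admissible choices of the pair $(\alpha,\beta)$ are joined by a smooth homotopy which, after a small perturbation, is generic: for all but finitely many parameters the curves are immersed and meet transversally with no triple points, and at the finitely many bad parameters the homotopy undergoes a standard elementary move, namely a tangency move in which a pair of points of $\alpha\cap\beta$ of opposite signs is born or dies, or a triple-point move in which three intersection points persist but change their cyclic arrangement (self-tangency moves of $\beta$ alone affect neither $\alpha\cap\beta$ nor any free homotopy class and are harmless). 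It then suffices to check that $\sum_p\varepsilon_p\,\widehat{\alpha*_p\beta}$ is unchanged under each move. For a tangency move the two new points $p_+,p_-$ satisfy $\varepsilon_{p_+}=-\varepsilon_{p_-}$, while $\alpha*_{p_+}\beta$ and $\alpha*_{p_-}\beta$ differ only by sliding the basepoint along the short arc between $p_\pm$, hence are freely homotopic, so the two terms cancel. For a triple-point move, placing the basepoint at the momentary triple point exhibits the three based loops as related by conjugation in $\pi_1$ in a way that preserves the signed sum. Thus $[x,y]$ is well defined.

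In part~(2), skew-symmetry is immediate: computing $[x,y]$ and $[y,x]$ with the \emph{same} representatives $\alpha,\beta$, at each $p\in\alpha\cap\beta$ the sign for the order $(\beta,\alpha)$ is the negative of the sign for $(\alpha,\beta)$, whereas $\beta*_p\alpha=\alpha^{-1}(\alpha*_p\beta)\alpha$ in $\pi_1(\Sigma,p)$, so $\widehat{\beta*_p\alpha}=\widehat{\alpha*_p\beta}$ and $[y,x]=-[x,y]$. The Jacobi identity is the technical heart. I would fix pairwise-transverse immersed representatives $\alpha,\beta,\delta$ of $x,y,z$ with no triple points and expand each double bracket in two stages: from $[y,z]=\sum_{q\in\beta\cap\delta}\varepsilon_q\,\widehat{\beta*_q\delta}$ together with the fact that a transverse representative of $\beta*_q\delta$ meets $\alpha$ in a set naturally bijective with $(\alpha\cap\beta)\sqcup(\alpha\cap\delta)$, the term $[x,[y,z]]$ becomes a double sum over pairs $(q,r)$ with $q\in\beta\cap\delta$ and $r\in(\alpha\cap\beta)\sqcup(\alpha\cap\delta)$, each summand being $\varepsilon_q\varepsilon_r$ times the free homotopy class of an explicit loop read off from the local pictures at $q$ and $r$. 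Carrying this out for $[x,[y,z]]$, $[y,[z,x]]$ and $[z,[x,y]]$, the left-hand side of the Jacobi identity becomes a signed sum indexed by ordered pairs of intersection points, one from each of two of the three pairwise-intersection sets $\alpha\cap\beta$, $\beta\cap\delta$, $\delta\cap\alpha$. The crux is the bookkeeping: each such configuration of two points arises from exactly two of the three expanded brackets, and the two resulting summands carry conjugate loops -- hence equal elements of $\widehat\pi(\Sigma)$ -- with opposite overall signs, so they cancel and the total vanishes. Making the signs and the loop-versus-conjugacy comparison precise is the delicate part, essentially Goldman's case analysis, and I expect it to be the main obstacle of the whole theorem.

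For part~(3), the ``if'' direction is trivial: when $\beta\cap\alpha=\varnothing$ the sum in~\eqref{Goldman} is empty, and $[x,y]$ is already known to be well defined by part~(1), so $[x,y]=0$. For the converse, fix a complete hyperbolic metric on $\Sigma$ and let $\alpha,\beta$ be the geodesic representatives of $x=\widehat\alpha$ and of $y$; since $x$ is represented by a simple closed curve, $\alpha$ is a simple closed geodesic, and by Lemma~\ref{int-geods} the geodesics $\alpha$ and $\beta$ meet in the minimal number of points. If $\alpha\cap\beta=\varnothing$ we may take this $\beta$ and are done; otherwise I would derive a contradiction from a no-cancellation lemma, namely that because $\alpha$ is \emph{simple}, any $p,p'\in\alpha\cap\beta$ with $\widehat{\alpha*_p\beta}=\widehat{\alpha*_{p'}\beta}$ satisfy $\varepsilon_p=\varepsilon_{p'}$, so that no two terms of~\eqref{Goldman} cancel and $[x,y]=\sum_p\varepsilon_p\,\widehat{\alpha*_p\beta}\neq 0$. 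I would prove this lemma by lifting to the universal cover $\H$: the full preimage $\widetilde\alpha$ of the simple geodesic $\alpha$ is a pairwise-disjoint family of complete geodesics, the class $\widehat{\alpha*_p\beta}$ is represented by a hyperbolic isometry whose axis crosses a component of $\widetilde\alpha$, the side on which it crosses being determined by $\varepsilon_p$, and a linking argument using the disjointness of the components of $\widetilde\alpha$ shows that conjugate such isometries cross with the same sign, forcing $\varepsilon_p=\varepsilon_{p'}$. This positivity phenomenon for simple curves is the hyperbolic input with no purely combinatorial substitute; it is a secondary obstacle and is precisely \cite[Theorem~5.17(i)]{MR0846929}.
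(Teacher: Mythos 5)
The paper offers no proof of this theorem. It is stated as background, attributed to Goldman, and supported only by explicit citations to Theorems 5.2, 5.3, and 5.17(i) of~\cite{MR0846929}; so there is no argument in the paper to compare yours against.

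That said, your reconstruction is a reasonable outline of how Goldman's result is actually proved. A few cautions about where it is a sketch rather than a proof. For part~(1), the invariance-under-elementary-moves strategy is the right one, but your inventory of singular events and your treatment of the triple-point move are asserted rather than verified: for instance an $\alpha\beta\beta$ triple point contributes only two points of $\alpha\cap\beta$, not three, and confirming that the based loops $\alpha*_p\beta$ and their signs are unchanged across the move requires drawing the local picture rather than invoking a conjugation in $\pi_1$. For part~(2), you correctly identify the Jacobi identity as the core difficulty, describe the correct double-sum-and-pairwise-cancellation bookkeeping, and then explicitly stop short of carrying it out; that is an honest but genuine gap in the write-up. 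For part~(3), your no-cancellation lemma for a simple geodesic $\alpha$ — that $\widehat{\alpha*_p\beta}=\widehat{\alpha*_{p'}\beta}$ forces $\varepsilon_p=\varepsilon_{p'}$, via lifting to $\H$ and using that the preimage of a simple geodesic is an embedded family of disjoint geodesics — is exactly the mechanism in Goldman's Theorem 5.17(i), and your sketch of it is correct in outline. In short, what you have is a sound roadmap to Goldman's proof rather than a complete argument, which is proportionate given that the paper itself only cites the result.
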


We will frequently abuse notation and speak of the Goldman bracket of two curves $\alpha$ and $\beta$ in a surface $\Sigma$ to mean the Goldman bracket of their free homotopy classes  $\widehat{\alpha}$ and $\widehat{\beta}$ in $\widehat\pi(\Sigma)$.

\section{Proof of Theorem \ref{main}}

Suppose $f$ is homotopic to an orientation-preserving homeomorphism $g\co  \Sigma'\to \Sigma$. Without loss of generality,  we may assume $g$ is an orientation-preserving diffeomorphism since in dimension two, any homeomorphism is isotopic, thus properly homotopic to a diffeomorphism. By the definition of the Goldman bracket, it follows that $g$ commutes with the Goldman bracket. As $f$ and $g$ are homotopic, it follows that $f$ also commutes with the Goldman bracket. This proves one implication of Theorem~\ref{main}.

Conversely, suppose that $f\co  \Sigma'\to \Sigma$ is a homotopy equivalence between non-compact surfaces such that $f$ commutes with the Goldman bracket. Pick base-points $p'\in \Sigma'$ and $p\in \Sigma$ so that $f(p')=p$. As sketched in Section~\ref{outline}, we will construct a proper map $g$ that is homotopy equivalent to $f$. This is an inductive construction, where we construct surfaces $K_m'$  and maps $g_m\co K_m'\to\Sigma$. To keep track of base-points, we will also construct inductively a family of trees $T_m'\subset K_m'$ starting with $T_0'={p'}$ so that $T_{m}'\cap K_{m-1}'= T_{m-1}'$ for $m > 1$ and the terminal vertices of $T_m'$ lie on $\del K_m'$ with one terminal vertex in each boundary component. We will also modify $f$ by a homotopy in a neighborhood of the trees $T_m'$ (with $f$ fixed throughout the homotopy on $p'$, and on the set $K_{m-1}$ for $m > 1$).

Pick an exhaustion $K_1 \subset K_2\subset \dots \subset K_n\subset \dots$ of $\Sigma$ by compact subsurfaces with boundary such that $p\in K_1$ and $K_1$ (and hence every $K_m$) is not a disc or an annulus -- this is possible as $\Sigma$ is not a plane or a cylinder.
Also choose an exhaustion $L_1'\subset L_2'\subset \dots \subset L_n'\subset \dots$ of $\Sigma'$ by compact bordered subsurfaces such that $p'\in L_1'$

We first construct a compact subsurface $K_1'\subset \Sigma'$, a map $g_1\co  K_1'\to \Sigma$, and a tree $T'_1\subset K_1'$, which satisfy various properties. These properties will allow us to continue the construction inductively and to ensure that the resulting map is proper.

\subsection{Constructing the subsurface}\label{ExtendedFilling}

The construction of the subsurface is almost identical in the base case and the general case, so we describe the general case directly. Fix $m\geq 1$. If $m > 1$, assume that a subsurface $K_{m-1}'$ has been constructed. Let $Q_m' \coloneqq L_1'$ if $m=1$ and $Q_m' \coloneqq L_m'\cup K_{m-1}'$ if $m > 1$.

Consider a system of simple closed curves $\alpha_1$, $\alpha_2$, \dots, $\alpha_{l}$ that fill $K_m$ (as defined in Section~\ref{filling}). Thus, if $\gamma$ is a closed curve that can be homotoped to be disjoint from each of the curves $\alpha_i$, then $\gamma$ is homotopic to a closed curve in $\Sigma \setminus K_m$. We enlarge this collection to ensure that a peripheral curve must intersect a curve in the system. Namely, if $C\subset \partial K_m$ is a boundary component of $K_m$ which is not the boundary of a component in $\overline{\Sigma\setminus K_m}$ homeomorphic to the half-open cylinder $[0, \infty)\times \R$ or the closed unit disc, then there exists a simple closed curve $\alpha^{(C)}$  so that $C$ and $\alpha^{(C)}$ cannot be homotoped to be disjoint. Add such a curve for each boundary component of $K_m$ that is not the boundary of a cylinder or disc in $\Sigma$. Let $\alpha_1$, $\alpha_2$, \dots, $\alpha_{l}$, $\alpha_{l+1}$, \dots, $\alpha_{k}$ be the resulting collection of curves; see Figure~\ref{drawing4}. 

\begin{figure}$$\adjustbox{trim={0.0\width} {0.0\height} {0.0\width} {0.0\height},clip}{\def\svgwidth{\linewidth}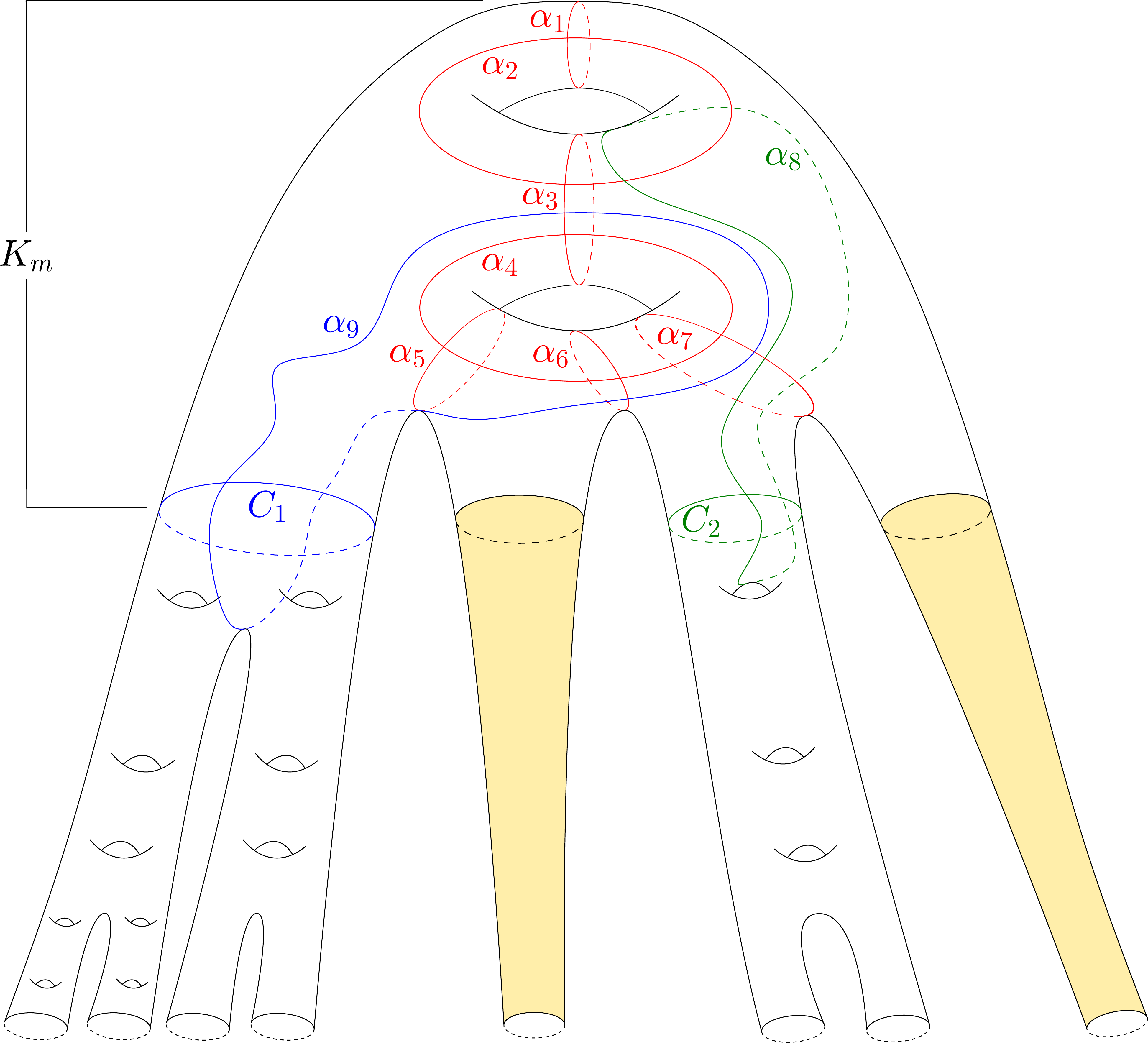}$$\caption{$\left\{\alpha_1,...,\alpha_7\right\}$ is a filling system of $K_m$ and $\left\{\alpha_1,...,\alpha_9\right\}$ is an extended filling system for $K_m$ because $I_\Sigma\left(C_1,\alpha_9\right)\neq 0\neq I_\Sigma\left(C_2,\alpha_8\right)$. }\label{drawing4}
\end{figure}

As $f$ is a homotopy equivalence, there exist closed curves $\alpha'_i\subset \Sigma'$, $1\leq i\leq k$, such that $f_*(\alpha'_i)$ is homotopic to $\alpha_i$. Further, we can assume that the curves $\alpha'_i$ are pairwise transversal and intersect only in double points.

\begin{lemma}\label{subsurface}
  There exists a compact subsurface $K_m'\subset \Sigma'$ that satisfies the following:
  \begin{enumerate}
    \item For all $i$, $1\leq i\leq k$, $\alpha_i'\subset K_m'$.
    \item $p'\in K_m'$.
          \item\label{it:big} $Q_m'\subset K_m'$.
          \item\label{it:noncpt} The closure of every component of $\Sigma'\setminus K_m'$ is non-compact.
          \item\label{it:connected} Every component of $\overline{\Sigma'\setminus K_m'}$ intersects $K_m'$ in a single component of $\del K_m'$.
  \end{enumerate}
\end{lemma}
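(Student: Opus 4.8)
The plan is to build $K_m'$ in two stages: first take a ``large enough'' compact subsurface of $\Sigma'$ swallowing all the data we care about, then modify it so that its complement is well-behaved. For the first stage, I would start with a compact set $A \subset \Sigma'$ that contains $p'$, contains $Q_m'$, and contains all the (compact) images of the curves $\alpha_1', \dots, \alpha_k'$; since there are finitely many such curves and $Q_m'$ is compact, $A$ is compact. Because $\{L_n'\}$ exhausts $\Sigma'$, some $L_N'$ contains $A$ in its interior; taking a regular neighborhood of $A \cup L_N'$ (or just $L_N'$ itself enlarged if necessary) gives a compact subsurface $F \subset \Sigma'$ with $F \supset A$, hence satisfying conditions (1), (2), and (3) with $F$ in place of $K_m'$. (Here I use that a regular neighborhood of a compact subsurface is again a compact subsurface with simple-closed-curve boundary.)

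The second stage addresses conditions (4) and (5), which concern the complement $\overline{\Sigma' \setminus F}$. This is where the main work lies. The complement of a compact subsurface in a surface has finitely many components, each a subsurface whose frontier is a union of boundary circles of $F$. To achieve (4), I would absorb into $F$ every component of $\overline{\Sigma' \setminus F}$ that is compact: enlarging $F$ by a compact set keeps $F$ compact and keeps (1)--(3) valid, and since $\Sigma'$ is non-compact, at least one non-compact complementary component survives, and after this step \emph{all} surviving complementary components are non-compact. To achieve (5), I would note that a component $V'$ of $\overline{\Sigma' \setminus F}$ might touch $F$ along several boundary circles of $F$; in that case $F \cup V'$ (capping off along those shared circles) either is compact — contradicting non-compactness of $V'$ — or we can instead enlarge $F$ by attaching a small collar/annulus that merges the offending boundary circles. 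More cleanly: replace $F$ by $\Sigma' \setminus (\bigsqcup_j W_j)$ where the $W_j$ are pairwise disjoint \emph{closed} subsurface neighborhoods of the non-compact complementary components, chosen so that each $W_j$ meets $F$ in a single circle — one can do this by taking, for each non-compact complementary component $V'$, a properly embedded simple arc or a subsurface deformation retract that realizes $V'$ ``from a single boundary circle'', possibly pushing part of $V'$'s frontier into $F$. The resulting $K_m'$ is compact, still contains $A$ (we only ever enlarged), and by construction has the stated complement structure.

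The key ordering is: (i) choose $F$ large enough for (1)--(3); (ii) absorb compact complementary components for (4); (iii) merge multiple frontier circles of each remaining complementary component into one for (5), checking that this last move does not reintroduce a compact complementary component. Steps (i) and (ii) are routine point-set/regular-neighborhood arguments. \textbf{The main obstacle is step (iii)}: ensuring that after forcing each non-compact complementary piece to abut $K_m'$ along a single circle, we have not accidentally (a) made $K_m'$ non-compact, or (b) created a new compact complementary component, or (c) disconnected $K_m'$ or a complementary component. I expect this to be handled by working with the finite ``dual graph'' of the decomposition $\Sigma' = F \cup (\text{complementary components})$ along $\partial F$: merging frontier circles of a fixed complementary component $V'$ corresponds to contracting edges of this graph incident to the vertex $V'$, and a finite sequence of such contractions, interleaved with re-absorbing any compact pieces that appear, terminates in the desired configuration. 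One should also invoke Lemma~\ref{int-geods} (or simply the classification of surfaces) only insofar as it guarantees the boundary circles can be taken disjoint and in ``general position'', which keeps all the cut-and-paste operations legitimate.
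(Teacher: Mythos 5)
Your stages (i) and (ii) --- taking a compact connected subsurface large enough to contain $p'$, $Q_m'$, and the curves $\alpha_i'$, then absorbing the compact complementary components --- match the paper's first two steps exactly. The genuine gap is at stage (iii), which you yourself flag as the main obstacle, and neither of your suggested fixes closes it: attaching a ``collar/annulus'' to two boundary circles does not merge them (you want a \emph{band}, i.e.\ a $1$-handle, not an annulus), and your ``more cleanly'' alternative --- replacing $F$ by the complement of subsurface neighbourhoods $W_j$ each already meeting $F$ in a single circle --- is circular, since producing such $W_j$ \emph{is} the content of condition (5).

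The paper's stage-(iii) construction is concrete and short: for each non-compact component $V'$ of $\overline{\Sigma' \setminus K_m'}$ that meets $K_m'$ along boundary circles $C_1', \dots, C_n'$ with $n>1$, choose disjoint properly embedded arcs $\beta_2', \dots, \beta_n'$ in $V'$ with $\beta_j'$ joining $C_1'$ to $C_j'$, and replace $K_m'$ by $K_m' \cup \mathcal{N}\bigl(\bigcup_{j\ge 2}\beta_j'\bigr)$, where $\mathcal{N}$ denotes a regular neighbourhood taken inside $V'$. Each band $\mathcal{N}(\beta_j')$ merges two boundary circles into one, so after attaching $n-1$ bands the $n$ circles become a single one, which is condition (5). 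This also settles your worries (a)--(c): a regular neighbourhood of finitely many compact arcs is compact, so $K_m'$ stays compact; $K_m'$ stays connected since it was only enlarged; and $V' \setminus \mathcal{N}\bigl(\bigcup_j\beta_j'\bigr)$ is $V'$ minus a compact set, hence still non-compact, and it remains connected because cutting a connected surface along a properly embedded arc joining two \emph{distinct} boundary circles never separates it (the two copies of the arc in the cut surface lie on a single new boundary circle, so they would be forced into the same component). No appeal to Lemma~\ref{int-geods} or to hyperbolic geometry is needed here; the arcs and regular neighbourhoods are purely topological.
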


\begin{proof}
  To ensure the first three conditions, we simply take a compact connected subsurface $K_m'$ that contains $\left(\bigcup_i \alpha'_i\right)\cup Q'_{m-1}\cup \{p'\}$.

  Next, as $K_m'$ is compact, the closure $\overline{\Sigma'\setminus K_m'}$ of its complement has finitely many components $V'_1$, $V'_2$, \dots, $V'_n$. Suppose the compact complementary components are $V'_{i_1}$, $V'_{i_2}$,\dots $V'_{i_l}$, then we replace $K_m'$ by $K_m' \cup \left(\bigcup\limits_{j=1}^n V'_{i_j}\right)$. This is also compact and satisfies condition~(\ref{it:noncpt}).

  Finally, let $V'_1$, $V'_2$, \dots, $V'_l$ be the (non-compact) components of $\overline{\Sigma'\setminus K_m'}$. Suppose $V'_i$ intersects $K_m'$ in $n_i > 1$ components of $\del K_m'$, say $C_1'$, $C_2'$, \dots $C_{n_i}'$. For $j=2, 3, \dots n_i$, pick disjoint simple arcs $\beta_j'$ in $V'_j$ connecting $C_1'$ to $C_j'$ .  Replace $K_m'$ by $K_m' \cup \mathcal{N}\left(\bigcup\limits_{j=2}^{n_i} \beta_j'\right)$, where $\mathcal{N}$ denotes a regular neighbourhood in $V'_i$. Making a similar construction for each $V'_i$ that intersects $K_m'$ in more than one boundary component, we obtain a compact subsurface $K_m'$ that satisfies the condition~(\ref{it:connected}) (and continues to satisfy the other conditions).
\end{proof}

\begin{remark} \label{additional}
   The proof of Lemma~\ref{subsurface} shows that any non-compact surface $\Sigma$ without boundary has an exhaustion $\{K_i\}$ by compact subsurfaces such that for each $i=1,2,...$, the following hold: 
   \begin{itemize}
       \item the closure of every component of $\Sigma\setminus K_i$ is non-compact, and 
       \item every component of $\overline{\Sigma\setminus K_i}$ intersects $K_i$ is a single component of $\partial K_i$. 
   \end{itemize}
\end{remark}
\subsection{Pushing loop images outside subsurfaces}
Choose and fix a compact subsurface $K_m'$ satisfying the conditions of Lemma~\ref{subsurface}. Using the assumption that $f$ commutes with the Goldman bracket gives the following key lemma.

\begin{lemma}\label{loop_outside}
  Let $\gamma'$ be a closed curve in $\Sigma'$ that is homotopic to a closed curve in $\Sigma'\setminus K_m'$. Then $f_*(\gamma')$ is homotopic to a closed curve in $\Sigma\setminus K_m$.
\end{lemma}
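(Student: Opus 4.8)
The plan is to use the hypothesis that $f$ commutes with the Goldman bracket to transport the property ``homotopic off the filling curves'' from $\Sigma'$ to $\Sigma$, and then to upgrade it --- via Goldman's Theorem~\ref{goldman}(\ref{it:gold-scc}) and the geodesic disjointness statements of Lemma~\ref{int-geods} --- to an honest homotopy of $f_*(\gamma')$ into $\Sigma\setminus K_m$. I would first dispose of the degenerate case: if $\gamma'$ is null-homotopic then so is $f_*(\gamma')$, as $f$ is a homotopy equivalence, and a null-homotopic curve is homotopic to a constant curve, which may be placed in $\Sigma\setminus K_m$ (non-empty since $\Sigma$ is non-compact and $K_m$ is compact). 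So from now on one may assume $f_*(\gamma')$ is homotopically non-trivial.

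Next, since $\gamma'$ is homotopic to a curve in $\Sigma'\setminus K_m'$ while each filling curve $\alpha_i'$, $1\le i\le l$, lies in $K_m'$, the classes $\widehat{\gamma'}$ and $\widehat{\alpha_i'}$ have disjoint representatives, so $[\widehat{\gamma'},\widehat{\alpha_i'}]=0$ directly from the definition of the bracket (the defining sum is empty). Using that $f$ commutes with the bracket, that $f_*(\widehat{\alpha_i'})=\widehat{\alpha_i}$, and skew-symmetry, this gives $[\widehat{\alpha_i},\,f_*(\widehat{\gamma'})]=0$ for each $i$. Because each $\alpha_i$ is a \emph{simple} closed curve, Theorem~\ref{goldman}(\ref{it:gold-scc}) then produces, for every $i$, a representative of $f_*(\widehat{\gamma'})$ disjoint from $\alpha_i$.

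It remains to make these individual homotopies simultaneous. Here I would fix the hyperbolic metric on $\Sigma$ so that, as in Section~\ref{S:int-geods}, $K_m$ has geodesic boundary and the filling curves $\alpha_1,\dots,\alpha_l$ are realised as geodesics of $K_m$ --- hence as (simple) geodesics of $\Sigma$ by Lemma~\ref{int-geods}. Since for each $i$ the class $f_*(\widehat{\gamma'})$ has a representative disjoint from the geodesic $\alpha_i$, Lemma~\ref{int-geods}(3) shows that the unique geodesic representative $\delta$ of $f_*(\widehat{\gamma'})$ is disjoint from $\alpha_i$; being a single curve, $\delta$ is then disjoint from $\alpha_1,\dots,\alpha_l$ all at once. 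Thus $\delta$ is a homotopically non-trivial curve disjoint from a filling system of $K_m$, and by the filling property recalled in Section~\ref{filling} it is homotopic to a curve in $\Sigma\setminus\mathrm{int}(K_m)$, hence in $\Sigma\setminus K_m$. As $\delta$ represents $f_*(\gamma')$, this finishes the argument.

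The only step that is not purely formal is the last one: passing from ``$f_*(\gamma')$ can be pushed off each $\alpha_i$ separately'' to ``off all of them together''. This is precisely what the uniqueness and disjointness of geodesic representatives in Lemma~\ref{int-geods} provide, and it is the place where the proof genuinely uses hyperbolic geometry rather than only the algebra of the bracket. The one technical point requiring care is that the filling geodesics of $K_m$ remain geodesic in the ambient surface $\Sigma$, which is exactly why $K_m$ is taken with geodesic boundary.
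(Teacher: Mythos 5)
Your proof follows the same route as the paper: establish $[\gamma',\alpha_i']=0$ by disjointness, transport to $[f_*(\gamma'),\alpha_i]=0$ via the bracket-commuting hypothesis, invoke Goldman's Theorem~\ref{goldman}(\ref{it:gold-scc}) to push $f_*(\gamma')$ off each $\alpha_i$, and conclude via the filling property. You simply spell out two points the paper leaves implicit --- the geodesic-uniqueness argument that makes the individual disjointnesses simultaneous (the paper silently relies on Section~\ref{filling} and Lemma~\ref{int-geods} here) and the trivial null-homotopic case --- so this is the paper's argument with a bit more care, not a different one.
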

\begin{proof}
  Since $\gamma'$ is homotopic to a closed curve in $\Sigma'\setminus K_m'$, $\gamma'$ is homotopic to a curve that is disjoint from $\alpha_i'$ for all $i$, $1\leq i\leq k$. It follows that the Goldman bracket $[\gamma', \alpha_i']$ is zero for all $i$, $1\leq i\leq k$.

  As $f$ commutes with the Goldman bracket, $[f_*(\gamma'), \alpha_i]=0$ for all $i$. For each $i$, as $\alpha_i$ is a simple closed curve, it follows that $f_*(\gamma')$
  is homotopic to a curve disjoint from $\alpha_i$ by statement (\ref{it:gold-scc}) of Theorem~\ref{GoldmanBracket}. As a subset of the curves $\alpha_i$ fill $K_m$, it follows that $f_*(\gamma')$ is homotopic to a curve in $\Sigma\setminus K_m$.
\end{proof}

\subsection{Construction of the map in the base case}\label{S:base}
We next construct a map $g_1\co K_1'\to \Sigma$  that is homotopic to $f\vert_{K_1'}$ and satisfies $g_1(p')=p$ so that $g_1(\partial K_1')\cap K_1=\varnothing$. As $K_1'$ is a compact surface with a non-empty boundary, there exists a graph  $\Gamma'\subset K_1'$ such that $K_1'$ deformation retracts onto $\Gamma'$. We can choose $\Gamma'$ so that $p'\in \Gamma'$ and $\Gamma'\cap \partial K_1'=\varnothing$. A regular neighbourhood $\mathcal{N}(\Gamma')$ of $\Gamma'$ has a deformation retraction $\rho\co \mathcal{N}(\Gamma')\to \Gamma'$. We get a map $g_1\co \mathcal{N}(\Gamma')\to \Sigma$ by setting $g_1\vert_{\mathcal{N}(\Gamma')}=f\circ \rho$.

Further, $\overline{K_1'\setminus \mathcal{N}(\Gamma')}$ is a disjoint union of annuli $C_i'\times [0, 1]$, $1\leq i\leq n$, so that the boundary components of $K_1'$ are the curves $C_i' \times \{1\}$, $1\leq i\leq n$ and the boundary components of $\mathcal{N}(\Gamma')$ are $C_i' \times \{0\}$, $1\leq i\leq n$. By Lemma~\ref{loop_outside}, for each $i$, $1\leq i\leq n$, there is a curve $\gamma_i\subset \Sigma\setminus K_1$ so that $f(C_i')$ is homotopic to $\gamma_i$. We define $g_1$ on each annulus $C_i'\times [0, 1]$ as follows. We set $g_1(C_i'\times \{1\})=\gamma_i$. Note that $C_i'\times \{0\}$ is  identified with a boundary component of $\mathcal{N}(\Gamma')$, and so $g_1$ has been defined on $C_i'\times \{0\}$. Finally, by construction  $g_1\vert_{C_i'\times \{0\}}$ and $g_1\vert_{C_i'\times \{1\}}$ are both homotopic in $\Sigma$ to $f(C_i')$ and hence homotopic to each other -- so we can extend $g_1$ to $C'_i\times [0,1]$ using a homotopy in $\Sigma$.

We next construct the tree $T_1'$ and modify the map $f$. Let the boundary components of $K_1'$ be $\delta_i'= C_i'\times \{1\}$, $1\leq i\leq n$. Pick a point $q_i'\in \delta_i'$ for each $i$, $1\leq i\leq n$. Pick a family of arcs $\theta_i'$ with interiors disjoint so that $\theta_i'$ is an arc from $p'$ to $q_i'$ in $K_1'$. Let $T_1'$ be the union of these arcs. As $T_1'$ is contractible and $f(p') = g_1(p')$, we can homotope $f$ fixing $p'$ so that $f\vert T_1'=g_1\vert T_1'$.

Note that for $1\leq i\leq n$,  $f(q_i')=g_1(q_i')$. Further, for $1\leq i\leq n$, as $(f\vert_{K_1'})_*=(g_1)_*\co \pi_1(K_1', p') \to \pi_1(\Sigma, f(p'))$,  and $f\vert_{\theta_i'}=g_1\vert_{\theta_i'}$, we deduce by using the \emph{change of base-point isomorphisms} corresponding to $\theta_i'$ and the fact that $f\circ {\theta_i'}=g_1\circ {\theta_i'}$ that $(f\vert_{K_1'})_*=(g_1)_*\co \pi_1(K_1', q') \to \pi_1(\Sigma, f(q'))$.

\subsection{Inductive properties}\label{S:inductive}

We now consider the inductive step, where $m > 1$, and we have already defined a subsurface $K'_{m-1}\subset \Sigma'$, a map $g_{m-1}\co K'_{m-1}\to \Sigma$, and a tree $T'_{m-1}\subset K'_{m-1}$. Further, we assume that these satisfy the following properties:
\begin{enumerate}
  \item The subsurface $K_{m-1}'$ satisfies the conditions of Lemma~\ref{subsurface}.
  \item $g_{m-1}(\del K_{m-1}')\subset \Sigma\setminus K_{m-1}$.
  \item $f\vert_{T'_{m-1}}=g_{m-1}\vert_{T'_{m-1}}$.
  \item The terminal vertices of $T'_{m-1}$ are in $\del K_{m-1}'$, with one terminal vertex in each component.
  \item For a terminal vertex $q'\in T'_{m-1}$, $(f\vert_{K_{m-1}'})_*=(g_{m-1})_*:\pi_1(K'_{m-1}, q') \to \pi_1(\Sigma, q)$.
\end{enumerate}

Observe that we have shown in Section~\ref{S:base} that the above properties hold for $m=2$. We will construct $K'_m$, $g_m$, and $T'_m$ and show (in Lemma~\ref{construction}) that the above properties hold for these.

\subsection{Maps on fundamental groups of components}\label{S:pi_map}

Let $K'_m$ be the subsurface constructed using Lemma~\ref{subsurface}. Let $V'^{(1)}$, $V'^{(2)}$, \dots $V'^{(k)}$ be the components of $\overline{K'_m\setminus K'_{m-1}}$. By Lemma~\ref{subsurface}, each component $V'^{(j)}$ intersects $K'_{m-1}$ in a single boundary component $\delta'^{(j)}$.

We construct extensions $g_m^{(j)}\co K'_{m-1}\cup V'^{(j)}\to \overline{\Sigma\setminus K_{m-1}}$ of $g_{m-1}$ for each component $V'^{(j)}$, $1\leq j \leq k$ in Section~\ref{S:complementary}. We first show that we have homomorphisms between fundamental groups of components of $\overline{K'_m\setminus K'_{m-1}}$ and components of $\overline{\Sigma\setminus K_{m-1}}$.

Fix a component $V'=V'^{(j)}$ of $\overline{K'_m\setminus K'_{m-1}}$  and let $\delta'=\delta'^{(j)}$. Let $i': V'\to \Sigma'$ be the inclusion map.
By construction $g_{m-1}(\delta')\subset \Sigma\setminus K_{m-1}$. Let $V$ be the component of $\Sigma\setminus K_{m-1}$ containing $g_{m-1}(\delta')$ and $i:V\to \Sigma$ be the inclusion map. We construct an extension with image contained in $V$. Most of the additional work beyond Lemma~\ref{loop_outside} is to control images of curves up to \emph{based} homotopy, not just free homotopy.

Let $q'\in V'$ be the terminal vertex of $T'_{m-1}$ corresponding to $V'$, and let $q= f(q')$. We have $(f\vert_{K_{m-1}'})_*=(g_{m-1})_*\co \pi_1(K_{m-1}', q')\to \pi_1(\Sigma, q)$.
Observe that $\delta'$ can be regarded as a closed curve based at $q'$. As the image of $\delta'$ is contained in $V$,  $(f\vert_{K_{m-1}'})_*([\delta'])=(g_{m-1})_*([\delta']) \in i_*(\pi_1(V, q))$.
The key idea is to apply Lemma~\ref{loop_outside} to both the curves $\gamma'$ and $\gamma'*\delta'$ for a curve $\gamma'\subset V'$ based at $q'$. We first need a lemma about amalgamated free products.

Let $A$ and $B$ be groups with $C\subseteq A$ and $C\subseteq B$ as subgroups of both of them. Let $G\coloneqq A*_C B$ be  the \emph{amalgamated free product} of $A$ and $B$ along $C$(see~\cite{MR0207802}). The groups $A$ and $B$ are called the \emph{factors} of $G$. Following terminology from topology, we say that an element in $A$ or $B$ is \emph{peripheral} if it is conjugate to an element in $C$.

\begin{lemma}\label{amalgamated_free_product}
  Let $a\in A$ be an element such that $a$ is not conjugate to an element in $C$. Let $g\in G$.
  \begin{enumerate}
    \item\label{it:diff} Suppose $g$ is conjugate to a non-peripheral element $b\in B$. Then $ag$ is not conjugate to an element in a factor of $G$.
    \item\label{it:same} If $g$ is conjugate to a non-peripheral element $a'$ in $A$ and $ag$ is conjugate to an element in a factor, then $g\in A$.
  \end{enumerate}
\end{lemma}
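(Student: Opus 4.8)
The plan is to translate the statement into the language of group actions on trees. Let $X$ be the Bass--Serre tree of the splitting $G = A *_C B$: its vertex set is $G/A \sqcup G/B$, its edge set is $G/C$, and the edge $gC$ joins $gA$ to $gB$. Then $G$ acts on $X$ without inversions; the stabilizer of a vertex of type $A$ (resp.\ type $B$) is a conjugate of $A$ (resp.\ of $B$), and the stabilizer of an edge is a conjugate of $C$. I would use two facts as a dictionary: (i) an element of $G$ is conjugate into a factor of $G$ if and only if it fixes a vertex of $X$ (i.e.\ is \emph{elliptic}), since the vertex stabilizers are precisely the conjugates of $A$ and of $B$; and (ii) every element of $G$ acts on $X$ either elliptically or as a hyperbolic translation. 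If $C = B$ then $G = A$ and both assertions of the lemma are immediate; and the hypotheses force $C \subsetneq A$ (since $a$ lies in $A \setminus C$, indeed is not conjugate into $C$), so I may assume throughout that the splitting is nondegenerate.

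The first step is to pin down fixed--point sets. If $x \in A$ is not conjugate into $C$, then $x$ fixes the vertex $v_A \coloneqq 1 \cdot A$ but fixes no edge incident to $v_A$: the stabilizer of such an edge is $a_0 C a_0^{-1}$ for some $a_0 \in A$, and $x \in a_0 C a_0^{-1}$ would force $x$ to be conjugate in $A$ into $C$. Since the fixed--point set of an isometry of a tree is a subtree, it follows that $\mathrm{Fix}(x) = \{v_A\}$, a single vertex; the symmetric statement holds in the factor $B$. Conjugating, any element of $G$ that is conjugate to a non-peripheral element of a factor fixes exactly one vertex, of the corresponding type. Hence in part~\ref{it:diff}, writing $g = u b u^{-1}$ with $b \in B$ non-peripheral, we obtain $\mathrm{Fix}(a) = \{v_A\}$ and $\mathrm{Fix}(g) = \{u \cdot v_B\}$, and these are disjoint because one vertex has type $A$ and the other type $B$. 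In part~\ref{it:same}, writing $g = u a' u^{-1}$ with $a' \in A$ non-peripheral, we obtain $\mathrm{Fix}(a) = \{v_A\}$ and $\mathrm{Fix}(g) = \{u \cdot v_A\}$, both of type $A$.

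The remaining ingredient is the standard fact that if two elliptic isometries $s, t$ of a tree have disjoint fixed--point sets, then $st$ is hyperbolic, with positive translation length $2\, d\big(\mathrm{Fix}(s), \mathrm{Fix}(t)\big)$ (Serre, \emph{Trees}, \S{}I.6.5; alternatively one exhibits the axis directly). In part~\ref{it:diff} this applies with $s = a$, $t = g$ and shows that $ag$ is hyperbolic, hence not elliptic, hence not conjugate into a factor. In part~\ref{it:same}, $ag$ is assumed conjugate into a factor, so it is elliptic; by fact (ii) it is then not hyperbolic, so by the contrapositive of the standard fact $\mathrm{Fix}(a) \cap \mathrm{Fix}(g) \neq \varnothing$, i.e.\ $v_A = u \cdot v_A$, i.e.\ $u \in \mathrm{Stab}(v_A) = A$, and therefore $g = u a' u^{-1} \in A$.

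The main obstacle is essentially a choice of machinery. The argument above leans on the tree--theoretic lemma about products of elliptic isometries, which is not currently among the paper's references; the self-contained alternative is to argue with normal forms in the amalgamated product (as in Magnus--Karrass--Solitar, which \emph{is} cited). There one writes $g = u b u^{-1}$ (respectively $u a' u^{-1}$) with $u$ of minimal syllable length, forms $ag$, and cyclically reduces. The one delicate point in that bookkeeping is that the cyclic reduction cannot keep collapsing syllables indefinitely --- precisely because $u^{-1} a u$ never lies in $C$, which is exactly the hypothesis that $a$ is not conjugate into $C$. Keeping track of the parity of the syllable length of $u$ then shows, in part~\ref{it:diff}, that $ag$ is conjugate to a cyclically reduced word of length at least $2$, and in part~\ref{it:same}, that this occurs unless $u \in C$, i.e.\ unless $g \in A$.
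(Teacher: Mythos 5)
Your proof is correct, and it takes a genuinely different route from the paper's. The paper works entirely with reduced and cyclically reduced words in the amalgamated product, citing Magnus--Karrass--Solitar Theorem 4.6 (an element conjugate into a factor has every cyclically reduced representative in a factor), and then runs a case analysis on the syllable structure of the conjugating element $h$, exhibiting an explicit cyclically reduced form of $ag$ of syllable length $\geq 2$ in every case except the one that forces $g\in A$. Your argument instead passes to the Bass--Serre tree and reduces both parts to the single geometric fact that if two elliptic isometries of a tree have disjoint fixed subtrees then their product is hyperbolic. The hypothesis that $a$ (resp.\ the non-peripheral element of the factor) is not conjugate into $C$ is used in exactly one place --- to shrink the fixed subtree down to a single vertex --- which is a clean repackaging of the paper's observation that cyclic reduction cannot collapse past the first syllable. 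What the tree argument buys is conceptual economy: one lemma about isometries replaces eight cases of word bookkeeping, and the dichotomy ``same type / different type'' for the two fixed vertices is a transparent translation of the dichotomy in parts~\ref{it:diff} and~\ref{it:same}. What the paper's argument buys is self-containedness within the combinatorial group theory already cited; you flag this yourself, and your sketch of the word-based fallback matches the paper's proof in spirit. One small caveat: the precise statement you attribute to Serre, \emph{Trees}, \S{}I.6.5 is usually given there in a slightly different form (Serre's Corollary 3 to Proposition 26 gives the elliptic/hyperbolic dichotomy; the ``product of elliptics with disjoint fixed sets is hyperbolic'' statement is nearby and elementary), but since you only need hyperbolicity and not the exact translation length $2\,d(\mathrm{Fix}(s),\mathrm{Fix}(t))$, this is only a citation-hygiene point, not a gap.
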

\begin{proof}
  By \cite[Theorem 4.6]{MR0207802}, if $ag$ is conjugate to an element in a factor, then any cyclically reduced word representing an element conjugate to $ag$ is contained in a factor.

  First, suppose $g$ is conjugate to a non-peripheral element $b\in B$, say $g=hbh^{-1}$. We consider a reduced word $h=l_1\cdot l_2\cdot l_3\dots\cdot l_k$ representing $h$ (we will separate \emph{letters} in words in the free products by $\cdot$). There are a few different cases corresponding to in which factor the first and last letters of $h$ belong.

  We first consider the most non-trivial case, where $h=a_1\cdot b_1\cdot\ldots\cdot a_k\cdot b_k$, so $g=a_1\cdot b_1\cdot\ldots\cdot a_k\cdot b_k\cdot b \cdot b_k^{-1}\cdot a_k^{-1}\cdot\ldots\cdot b_1^{-1}\cdot a_1^{-1}$. We claim that a cyclically reduced word conjugate to $ag$ is $(a_1^{-1}aa_1)\cdot b_1\cdot\ldots\cdot a_k\cdot (b_k b b_k^{-1}) \cdot a_k^{-1}\cdot\ldots\cdot b_1^{-1}$. This clearly represents the element $a_1^{-1}a g a_1$, which is conjugate to $ag$. As $a$ and $b$ are not peripheral, $a_1^{-1}aa_1\notin C$ and $b_k b b_k^{-1}\notin C$, hence the word is indeed cyclically reduced. As this word is not contained in a factor, $ag$ is not conjugate to an element in a factor.

  The other three cases are analogous. In each case we obtain a cyclically reduced word that is not in a factor. For completeness, we list the cases and the cyclically reduced words we obtain:
  \begin{enumerate}
    \item $h=a_1\cdot b_1\cdot\ldots\cdot a_k$, $k\geq 1$, we obtain $(a_1^{-1}aa_1)\cdot b_1\cdot\ldots\cdot a_k\cdot b \cdot a_k^{-1}\cdot\ldots\cdot b_1^{-1}$;
    \item $h=b_1\cdot a_2\cdot\ldots\cdot a_k\cdot b_k$, we obtain $a\cdot b_1\cdot\ldots\cdot a_k\cdot (b_k b b_k^{-1}) \cdot a_k^{-1}\cdot\ldots\cdot b_1^{-1}$;
    \item $h=b_1\cdot a_2\cdot\ldots\cdot a_k$, we obtain $a\cdot b_1\cdot\ldots\cdot a_k\cdot b \cdot a_k^{-1}\cdot\ldots\cdot b_1^{-1}$;
  \end{enumerate}

  Next, suppose $g$ is conjugate to a non-peripheral element $a'\in A$, say $g=ha'h^{-1}$, and $ag$ is conjugate to an element in a factor. We again consider a reduced word representing $h$. If this is a single letter in $A$, i.e., $h=a_1\in A$, then $g=a_1 a' a_1^{-1}\in A$ as claimed. In all other cases, we see above that $ag$ is represented by a cyclically reduced word that is not in a factor, a contradiction. Again, we list the (four) cases and the cyclically reduced words we obtain:
  \begin{enumerate}
    \item $h=a_1\cdot b_1\cdot\ldots\cdot a_k\cdot b_k$, $k\geq 1$, we obtain $(a_1^{-1}aa_1)\cdot b_1\cdot\ldots\cdot a_k\cdot b_k \cdot a' \cdot b_k^{-1}\cdot\ldots\cdot b_1^{-1}$;
    \item $h=b_1\cdot a_2\cdot\ldots\cdot a_k\cdot b_k$, we obtain $a\cdot b_1\cdot\ldots\cdot a_k\cdot b_k \cdot a' \cdot b_k^{-1}\cdot\ldots\cdot b_1^{-1}$;
    \item $h=b_1\cdot a_2\cdot\ldots\cdot b_{k-1}\cdot a_k$, we obtain $a\cdot b_1\cdot\ldots\cdot a_k\cdot b_{k-1} \cdot (a_k a' a_k^{-1}) \cdot b_{k-1}^{-1}\cdot\ldots\cdot b_1^{-1}$;
    \item $h=a_1\cdot b_1\cdot a_2\cdot\ldots\cdot b_{k-1}\cdot a_k$,  we obtain $(a_1^{-1}aa_1)\cdot b_1\cdot\ldots\cdot (a_k a' a_k^{-1})\cdot \ldots\cdot b_1^{-1}$ which is not in a factor if $k> 1$, i.e., except for the case $k=1$, $h=a_1$ (mentioned above) where $g=a_1 a' a_1^{-1}\in A$.
  \end{enumerate}

\end{proof}

We will apply the above with $C$, the fundamental group of a simple closed curve $\delta$ in a surface $F$ that separates $F$, with $A$ and $B$ the fundamental groups of the closures of the components of $F\setminus \delta$. We use Lemma~\ref{split} in this context.

Recall that $V'$ is a component of $\overline {K_m'\setminus K_{m-1}'}$ with $\delta'=V'\cap K_{m-1}'$ and $V$ is a component of $\overline{\Sigma\setminus K_{m-1}}$ such that $f_*(\delta')$ is freely homotopic to a curve in $V$.
Further, $q'$ is a point in $\delta'$ and $q=f(q')$. The following lemma says that $f$ sends each loop in $V'$ based at $q'$ to a loop that is homotopic fixing basepoint to a loop in $V$ based at $q$, up to base-point fixing homotopy.

\begin{lemma}\label{pi_component_map}
  We have $ f_*(i'_*(\pi_1(V', q')))\subset i_*(\pi_1(V, q)).$
\end{lemma}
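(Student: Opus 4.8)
The plan is to deduce the inclusion from the amalgamated free product decomposition of $\pi_1(\Sigma,q)$ along the separating simple closed curve $\delta$ bounding $V$, using Lemma~\ref{loop_outside} at stage $m-1$ to control free homotopy classes of images and Lemma~\ref{amalgamated_free_product} to promote this to control of based homotopy classes. By Lemma~\ref{subsurface} applied at stage $m-1$, the component $V$ of $\overline{\Sigma\setminus K_{m-1}}$ meets $K_{m-1}$ in a single boundary curve $\delta$, so $\delta$ separates $\Sigma$ into $V$ and the closure $U$ of the other side; choosing an auxiliary path in $V$ from $q$ to $\delta$ and applying van Kampen, we get $\pi_1(\Sigma,q)=A*_C B$ with $A=i_*\pi_1(V,q)$, with $C$ the infinite cyclic subgroup carried by $\delta$, and with $B$ carried by $U$. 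The initial observation is that $d\coloneqq f_*([\delta'])=(g_{m-1})_*([\delta'])$ lies in $A$, since $g_{m-1}\circ\delta'$ is a loop contained in $V$ and based at $q$.

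Now fix a loop $\gamma'$ in $V'$ based at $q'$ and set $g\coloneqq f_*([\gamma'])$; we must show $g\in A$, since $f_*(i'_*\pi_1(V',q'))$ is exactly the set of such $g$. Let $W'$ be the component of $\overline{\Sigma'\setminus K_{m-1}'}$ with $W'\cap K_{m-1}'=\delta'$. As $V'\subset W'$ and $\delta'\subset W'$, both $\gamma'$ and the concatenation $\delta'*\gamma'$ (based at $q'$) are freely homotopic to curves in $\Sigma'\setminus K_{m-1}'$; hence, by Lemma~\ref{loop_outside} at stage $m-1$, both $g$ and its image $dg$ are freely homotopic to curves in $\Sigma\setminus K_{m-1}$. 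A curve in $\Sigma\setminus K_{m-1}$ is disjoint from $\delta$, so by Lemma~\ref{split} (with Lemma~\ref{int-geods}) each of $g$ and $dg$ is conjugate in $\pi_1(\Sigma,q)$ into $A$ or into $B$. Assume first that $d$ is non-peripheral, i.e.\ not conjugate into $C$, and apply Lemma~\ref{amalgamated_free_product} with $a=d$: if $g$ were conjugate to a non-peripheral element of $B$, part~\ref{it:diff} would say $dg$ is not conjugate into a factor, a contradiction; hence $g$ is conjugate either to a non-peripheral element of $A$ or into $C$. In the former case part~\ref{it:same} (with $a=d$, using that $dg$ is conjugate into a factor) gives $g\in A$, completing the main case.

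It remains to handle the case where $g$ is conjugate into $C$, and the degenerate situations. If $f\circ\gamma'$ is freely homotopic to $\delta^{n}$, then, since $\gamma'\subset V'$ is disjoint from the stage $m-1$ filling curves $\alpha_1',\dots,\alpha_k'$ (which lie in $K_{m-1}'$), we get $[\gamma',\alpha_i']=0$ and hence $[f_*(\gamma'),\alpha_i]=0$ for all $i$, so $f\circ\gamma'$ is freely homotopic to a curve disjoint from every $\alpha_i$, in particular from the curve $\alpha^{(\delta)}$ added at stage $m-1$; as $\alpha^{(\delta)}$ cannot be homotoped off $\delta$ we conclude $n=0$, so $\gamma'$ is null-homotopic and $g=1\in A$. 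The case $d\in C$ is disposed of in the same way (applied to $\delta'$ in place of $\gamma'$), and when $\delta$ bounds a half-open cylinder in $\Sigma$ (the disc case being excluded by the non-compactness clause of Lemma~\ref{subsurface}) then $V$ is a half-open annulus, $A=C=\langle[\delta]\rangle$, one checks from the construction that $V'$ is then an annulus, and $i'_*\pi_1(V',q')=\langle[\delta']\rangle$ maps into $\langle d\rangle\subseteq A$; the remaining corner cases (e.g.\ $\delta'$ null-homotopic or peripheral in $\Sigma'$, which happens only for small $m$) are elementary.

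The main obstacle — and the reason Lemma~\ref{amalgamated_free_product} is needed — is that Lemma~\ref{loop_outside} controls only \emph{free} homotopy classes, hence only tells us that $g$ is \emph{conjugate} into a factor; passing from ``conjugate into $A$'' to ``lies in $A$'' (that is, from free to based homotopy, which is what Section~\ref{S:complementary} will need in order to glue the extensions $g_m^{(j)}$ along the curves $\delta'$) is precisely what the normal-form bookkeeping in Lemma~\ref{amalgamated_free_product} achieves, once it is fed the auxiliary class $dg=f_*([\delta'*\gamma'])$ alongside $g$.
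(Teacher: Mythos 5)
Your main argument is a genuine and clean streamlining of the paper's proof: rather than first locating $f_*(\gamma')$ up to free homotopy in a component $\widehat V$ via a separate amalgamated decomposition around $\widehat V$ (and showing $\widehat V=V$) and only then passing to the decomposition around $V$, you work from the start with $\pi_1(\Sigma,q)=A*_C B$ around $\delta=\partial V$, feed Lemma~\ref{amalgamated_free_product} the pair $(g,dg)$, and read off $g\in A$ directly by combining parts~(\ref{it:diff}) and~(\ref{it:same}). This is logically sound when $d=f_*([\delta'])$ is non-peripheral in $A$, and your use of the added curve $\alpha^{(\delta)}$ to dispose of the peripheral subcases (either $g$ or $d$ conjugate into $C$) is also correct whenever $\alpha^{(\delta)}$ exists, i.e.\ whenever $V$ is not a half-open cylinder.

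The gap is precisely in the remaining case, where $V$ is a half-open cylinder. There $A=C$, so $d$ is automatically peripheral and your main mechanism is unavailable; moreover $\alpha^{(\delta)}$ does not exist, so the ``conjugate into $C$'' trick is also unavailable. You assert that ``one checks from the construction that $V'$ is then an annulus,'' but this is not a consequence of the construction of $K'_m$: at the point of the proof where Lemma~\ref{pi_component_map} is invoked there is no a priori correspondence between the topology of $V'$ and that of $V$ --- establishing such a correspondence is exactly what this lemma (and the subsequent induction) is for, so the assertion is circular as stated. The claim is in fact true, but proving it requires the step you skipped: take a hypothetical non-peripheral loop $\gamma'\subset V'$, show via the Goldman bracket that its free representative $\gamma$ is non-peripheral in whichever component $\widehat V$ of $\overline{\Sigma\setminus K_{m-1}}$ it lands in, and then run the amalgamated-product argument in the decomposition around $\widehat V$ (with $a$ the conjugate of $[\gamma]$ in $\pi_1(\widehat V)$ and the role of $g$ played by $f_*([\delta'])$, which sits non-peripherally in the $\pi_1(\overline{\Sigma\setminus\widehat V})$ factor because $K_{m-1}$ is not an annulus) to contradict Lemma~\ref{loop_outside} applied to $\gamma'*\delta'$; only then does one conclude $\widehat V=V$, hence that $V$ contains a non-peripheral curve, which is incompatible with $V$ being a cylinder. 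In short: your one-decomposition shortcut is correct when $d$ is non-peripheral, but the degenerate cylinder case genuinely needs the auxiliary decomposition around $\widehat V$ that the paper uses, and hand-waving it away leaves a real hole.
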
 

\begin{proof}
  Let $\gamma'$ be a curve in $V'$ based at $q'$, so $[\gamma']\in i'_*(\pi_1(V', q'))$. We show that $f_*([\gamma'])\in i_*(\pi_1(V, q))$. If $\gamma'$ is \emph{peripheral}, i.e., homotopic to a power of the boundary component $\delta'$, then either $\gamma'=\delta'^k$ for some $k\in\Z$ or $\gamma'=g'*\delta'^k*g'^{-1}$ for some $g'\in \pi_1(V', q')$ which is not peripheral. In the first case, $f_*([\gamma'])\in i_*(\pi_1(V, q))$ as $f_*([\delta'])=(g_{m-1})_*([\delta']) \in i_*(\pi_1(V, q))$. In the second case, it suffices to show that the non-peripheral curve $g$ is mapped to a curve in $i_*(\pi_1(V, q))$.
  Hence, we can assume that $\gamma'$ is not peripheral.

  By Lemma~\ref{loop_outside}, there is a curve $\gamma\subset \Sigma\setminus K_{m-1}$ so that $f_*(\gamma')$ is homotopic to $\gamma$. Thus, $\gamma\subset \widehat{V}$ for some component $\widehat{V}$ of $\overline{\Sigma\setminus K_{m-1}}$. Let $\widehat{\delta} = \widehat{V}\cap K_{m-1}$, and let $\widehat{q}\in\widehat{\delta}$. We modify $\gamma$ by a homotopy so that $\gamma$ intersects $\widehat{\delta}$ in the single point $\widehat{q}$.
 
  We claim that $\gamma$ is not peripheral, i.e., $\gamma$ is not homotopic to a curve with image in $\partial\widehat V=\widehat \delta$. Namely, as $\gamma'$ is a curve in $V'$ that is not peripheral, there is a simple closed curve $\beta'$ in $V'$ so that $\gamma'$ and $\beta'$ are not homotopic to disjoint curves. Hence, by statement~(\ref{it:gold-scc}) of Theorem~\ref{goldman}, we have $[\beta', \gamma']\neq 0$. The curve $f_*(\beta')$ is homotopic to a curve $\beta$ in $\Sigma\setminus K_{m-1}$. If $\gamma$ were peripheral, then $\beta$ and $\gamma$ would be homotopic to disjoint curves, hence $[\beta, \gamma]$ would become $0$. But $[\beta', \gamma']\neq 0$ implies $[\beta, \gamma]\neq 0$ as $f$ commutes with the Goldman bracket, this is a contradiction. Hence, $\gamma$ is not peripheral. 

  We next see that $\widehat{V}=V$. Suppose not, then $\gamma\subset \widehat{V}$ for a component $\widehat{V}\neq V$ of $\overline{\Sigma\setminus K_{m-1}}$.
  As $\gamma$ is homotopic to $f_*(\gamma')$, there exists a path $\theta$ in $\Sigma$ from $q$ to $\widehat{q}$ such that $f_*(\gamma')$ is homotopic in $\Sigma$ fixing base-point to $\theta *\gamma * \bar{\theta}$. Observe that $\pi_1(\Sigma, \widehat{q})$ is the amalgamated free product
 \begin{equation}
     \pi_1(\Sigma, \widehat{q}) = \pi_1(\widehat{V}, \widehat{q}) *_{\pi_1(\widehat{\delta}, \widehat{q})} \pi_1(\overline{\Sigma\setminus \widehat{V}}, \widehat{q}). \label{FirstAmalgamated}
 \end{equation}

  We identify $\pi_1(\Sigma, q)$ with $\pi_1(\Sigma, \widehat{q})$ using the change of base-point isomorphism determined by $\theta$, i.e., the isomorphism $[\lambda] \mapsto [\bar{\theta}*\lambda*\theta]$ for a loop $\lambda$ based at $q$. Under this identification, $f_*([\gamma']) \equiv [\gamma]$.
  Further, as $f_*(\delta')$  is homotopic to a curve $\eta$ in $V\subset \Sigma\setminus\widehat{V}$, $f_*([\delta'])$ is conjugate to an element  of $\pi_1(\overline{\Sigma\setminus\widehat{V}}, \widehat q)$, which is non-peripheral in $\Sigma\setminus\widehat{V}$ as $K_{m-1}$ is not an annulus or disc by assumption.
  Hence, with respect to the amalgamated free product given by Equation~\ref{FirstAmalgamated},  $f_*([\gamma' * \delta'])=f_*([\gamma'])\cdot f_*([\delta'])$ is of the form $ag$ of Lemma~\ref{amalgamated_free_product}, statement~(\ref{it:diff}).  It follows that $f_*(\gamma'*\delta')$ is not homotopic to a curve that is disjoint from $\widehat{\delta}$ using Lemma~\ref{split} (see Figure~\ref{drawing}). This contradicts Lemma~\ref{loop_outside} as  $\widehat{\delta}\subset \partial K_{m-1}$ and $f_*(\gamma'*\delta')$ is homotopic to a curve in $\Sigma\setminus K_{m-1}$. Thus, $\widehat{V}=V$.

\begin{figure}$$\adjustbox{trim={0.0\width} {0.0\height} {0.0\width} {0.0\height},clip}{\def\svgwidth{\linewidth}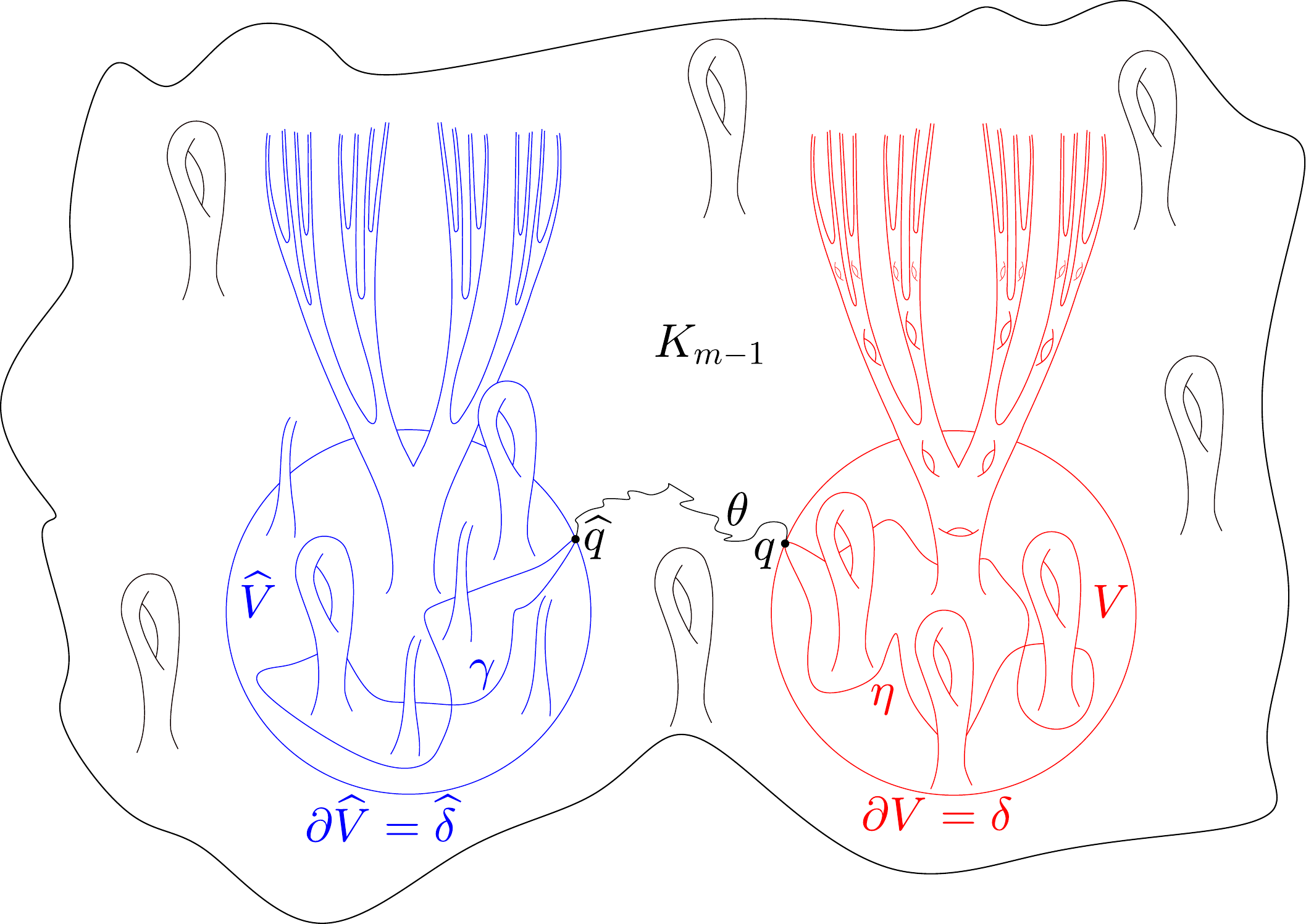}$$\caption{If $\widehat V\neq V$, $f_*([\gamma'])\cdot f_*([\delta])=[\gamma*\theta*\eta*\bar\theta]$ is not homotopic to a curve disjoint from $\del\widehat{V}$, giving a contradiction.}\label{drawing}
\end{figure}

Next, we can express the fundamental group $\pi_1(\Sigma, q)$ as an amalgamated free product
\begin{equation}
    \pi_1(\Sigma, q) = \pi_1(V, q) *_{\pi_1(\delta, q)} \pi_1(\overline{\Sigma\setminus V}, q). \label{SecondAmalgamated}
\end{equation}

  Next, as $\gamma\subset V$, modifying by a homotopy, we can assume that $\gamma$ is a loop based at $q$. As $\gamma$ is freely homotopic to $f_*(\gamma')$, $[\gamma]\in \pi_1(\Sigma, q)$ is conjugate to the element $f_*([\gamma'])\in \pi_1(\Sigma, q)$, which is not peripheral as $\gamma$ is not peripheral. We claim that $f_*([\delta'])$ is also not peripheral. Namely, as $V$ contains a curve that is not peripheral, namely $\gamma$, the boundary component $\delta$ of $V$ which is contained in $K_m$ does not bound a cylinder or a disc. Hence, one of the curves $\alpha_i$ in Lemma~\ref{subsurface} cannot be homotoped to be disjoint from $\delta$. But $\delta'$ is disjoint from $\alpha_i'$, so $[\delta', \alpha_i']\neq 0$, but $[f_*(\delta'), \alpha_i] =0$. As in the case of $\gamma'$, we conclude that $f_*([\delta'])$ is not peripheral (as it is not conjugate to a power of $\delta$).

 For the amalgamated free product decomposition~\ref{SecondAmalgamated}, we apply statement~(\ref{it:same}) of Lemma~\ref{amalgamated_free_product} with $a=f_*([\delta'])$ and $g=f_*([\gamma'])$. As $f_*(\delta' * \gamma')$ is conjugate to an element disjoint from $K_{m-1}$, hence from $\delta$, $f_*([\delta' *\gamma'])$ is conjugate to an element in $\pi_1(V, q)$ or  $\pi_1(\overline{\Sigma\setminus V}, q)$, i.e., an element of a factor. It follows by statement~(\ref{it:same}) of Lemma~\ref{amalgamated_free_product} that $f_*([\gamma'])\in \pi_1(V, q)$, as desired.
 \end{proof}

\subsection{Maps on complementary components}\label{S:complementary}
The rest of the construction on each component is analogous to the base case with some refinements.

We continue with the notation of Section~\ref{pi_component_map}.
On the component $V'=V'^{(j)}$ of $\overline{K_m\setminus K_{m-1}}$, we construct a map $g_m^{(j)}\co V'\to V$ extending $g_{m-1}\vert_{\delta'}$ so that $i_*\circ (g_m^{(j)})_*=(f\vert_{V'})_*\co\pi_1(V', q')\to \pi_1(\Sigma, q)$ and $g_m^{(j)}(\partial K_m'\cap V')\subset V\subset \Sigma\setminus K_m$.

Namely, we extend the graph in $V'=V'^{(j)}$ consisting of a single vertex $q'$ and the edge $\delta'$ to a graph $\Gamma'$ so that $V'$ deformation retracts to $\Gamma'$ (this exists as $V'$ has more than one boundary component as a consequence of Theorem~\ref{subsurface}, statement~(\ref{it:noncpt})). By Lemma~\ref{pi_component_map}, we can extend $g_{m-1}\vert_{\delta'}$ to a map $\varphi\co \Gamma'\to V$ so that $\varphi_*=(f\vert_{\Gamma'})_*\co\pi_1(\Gamma', q')\to \pi_1(\Sigma, q)$ (as any homomorphism from the fundamental group of a graph to that of a topological space is induced by a map). A regular neighbourhood $\mathcal{N}(\Gamma')$ of $\Gamma'$ has a deformation retraction $\rho\co \mathcal{N}(\Gamma')\to \Gamma'$. We get a map $g_m^{(j)}\co \mathcal{N}(\Gamma')\to \Sigma$ by setting $g_m^{(j)}\vert_{\mathcal{N}(\Gamma')}=\varphi\circ \rho$ (see Figure~\ref{complicated}).

Next, $\overline{V'\setminus \mathcal{N}(\Gamma')}$ is a disjoint union of annuli $C_i'\times [0, 1]$, $1\leq i\leq k$, so that the boundary components of $K_m'$ other than $\delta'$ are the curves $C_i' \times \{1\}$, $1\leq i\leq k$ and the boundary components of $\mathcal{N}(\Gamma')$ are $C_i' \times \{0\}$, $1\leq i\leq k$. By Lemma~\ref{subsurface}, $f(C_i'\times \{1\})$ is homotopic to a curve $\gamma\subset \Sigma\setminus K_m$ for each $i$, $1\leq i\leq k$. We define $g_m^{(j)}$ on $C_i'\times \{1\}$ to map to $\gamma$.
As $f(C_i'\times \{0\})$ is homotopic to $f(C_i'\times \{1\})$ and $g_m^{(j)}(C_i'\times \{0\})$, we deduce that $\gamma$ is homotopic to $g_m^{(j)}(C'\times \{0\})$ in $\Sigma$, and hence in $\Sigma\setminus K_{m-1}$by Lemma~\ref{split}. We use this homotopy to extend $g_m^{(j)}$ to $C_i'\times [0, 1]$.

We also extend the tree $T_{m-1}'$ to a tree $T'^{(j)}_m$ and modify $f$ as in the base case. Let the boundary components of $V'$ other than $\delta'$ be $\delta_1', \dots, \delta_n'$, $\delta_i' = C_i'\times \{1\}$. Pick a point $q_i'\in \delta_i'$ for each $i$, $1\leq i\leq n$. Pick a family of arcs $\theta_i'$ with interiors disjoint so that $\theta_i'$ is an arc from $q'$ to $q_i'$ in $K_1'$. Let $T_m'^{(j)}$ be the union of these arcs. As $T_m'^{(j)}$ is contractible we can homotope $f$ fixing $\Sigma'\setminus (\textrm{int}(V'))$ so that $f\vert T_m'^{(j)}=g_m^{(j)}\vert T_m'^{(j)}.$

\begin{figure}$$\adjustbox{trim={0.0\width} {0.0\height} {0.0\width} {0.0\height},clip}{\def\svgwidth{\linewidth}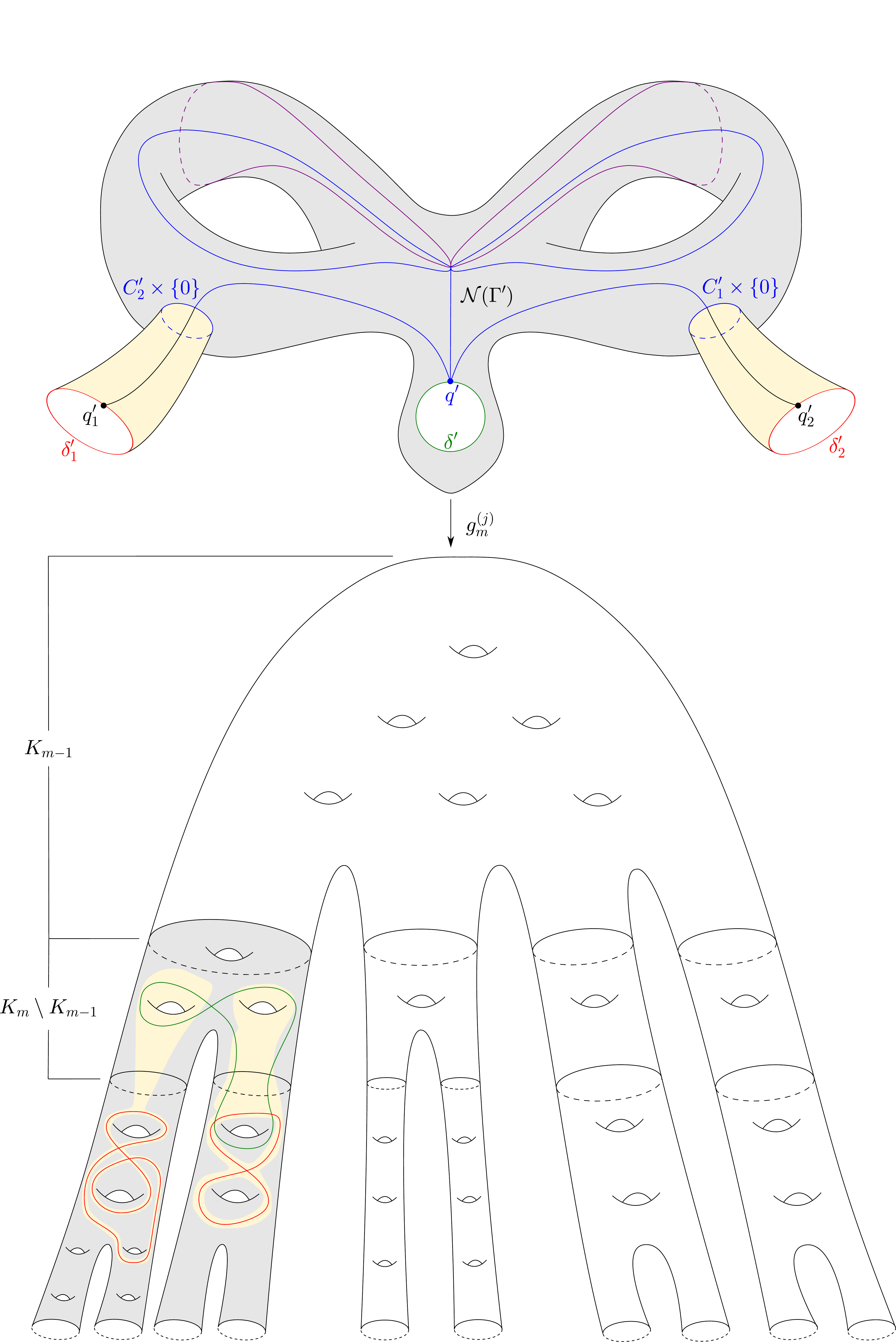}$$\caption{Description of $g^{(j)}_m\co V'=V^{(j)}_m\to V\subset\Sigma\setminus K_{m-1}$. On the top, i.e., in $V'$, the purple, blue, and green portions (arcs and circles) form $\Gamma'$, and blue and black arcs form $T'^{(j)}_m$. At the bottom: the grey and yellow shades indicate $g^{(j)}_m(V')$ and $g^{(j)}_m(C_i'\times [0,1])$, respectively, and red and green loops denote $g^{(j)}_m(C_i'\times \{1\})$ and $g^{(j)}_m(\delta')$, respectively, where $i=1,2$.}\label{complicated}
\end{figure}

Note that for $1\leq i\leq n$,  $f(q_i')=g_m^{(j)}(q_i')$. Further, for $1\leq i\leq n$, as $\left(f\vert _{V'}\right)_*=\left(g_m^{(j)}\right)_*\co \pi_1\left(V', q'\right) \to \pi_1(\Sigma, f(q'))$, and $f\vert_{\theta_i'}=g_m^{(j)}\vert_{\theta_i'}$, we deduce by using the change of base-point isomorphisms corresponding to $\theta_i'$ and $f\circ {\theta_i'}=g_m^{(j)}\circ {\theta_i'}$ that $\left(f \vert_{V'}\right)_*=(g_m^{(j)})_*\co \pi_1\left(V', q_i'\right) \to \pi_1(\Sigma, f(q_i'))$.

\subsection{Combining the maps}\label{S:combining}

We define $g_m \co K_m'\to \Sigma$ as the unique map extending $g_{m-1}$ whose restriction to the component $V'^{(j)}$ of $\overline{K_m'\setminus K_{m-1}'}$ is $g_m^{(j)}$. Observe that $g_m(K_m'\setminus K_{m-1}')\subset \overline{\Sigma\setminus K_{m-1}}$ and $g_m(\del K_m') \subset \Sigma\setminus K_m$.

The tree $T_m'$ is $T_{m-1}'\cup \bigcup_j\left(T_m'^{(j)}\right)$. As the homotopies of $f$ had disjoint support, we take $f$ to be the result of the composition of the homotopies. We have $f\vert_{T_m'}=g_m\vert_{T_m'}$ and the required equalities on the fundamental groups.

To summarize, we have constructed a subsurface, a map, and a tree satisfying the following properties:
\begin{lemma}\label{construction}
  The subsurface $K'_m$, map $g_m$ and tree $T'_m$ satisfy the following properties:
  \begin{enumerate}
    \item The subsurface $K_{m}'$ satisfies the conditions of Lemma~\ref{subsurface}.
    \item $g_{m}(\del K_{m}')\subset \Sigma\setminus K_{m}$.
    \item $f\vert_{T'_{m-1}}=g_{m-1}\vert_{T'_{m-1}}$.
    \item The terminal vertices of $T'_{m-1}$ are in $\del K_{m-1}'$, with one terminal vertex in each component.
    \item For a terminal vertex $q'\in T'_{m-1}$, $(f\vert_{K_{m-1}'})_*=(g_{m-1})_*:\pi_1(K'_{m-1}, q') \to \pi_1(\Sigma, q)$.
    \item\label{it:proper} $g_m(K_m'\setminus K_{m-1}')\subset \overline{\Sigma\setminus K_{m-1}}$.
  \end{enumerate}
\end{lemma}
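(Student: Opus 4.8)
The plan is simply to read off each of the six assertions from the constructions carried out in Sections~\ref{ExtendedFilling}--\ref{S:combining}; there is no new geometric input here, only bookkeeping, with the one exception noted below. Property~(1) is immediate, since $K_m'$ was \emph{chosen} in Section~\ref{ExtendedFilling} to be a subsurface provided by Lemma~\ref{subsurface}. Property~(6), that $g_m(K_m'\sm K_{m-1}')\subset\overline{\Sigma\sm K_{m-1}}$, holds because $g_m$ restricts on each component $V'=V'^{(j)}$ of $\overline{K_m'\sm K_{m-1}'}$ to the map $g_m^{(j)}\co V'\to V^{(j)}$, and $V^{(j)}$ was taken to be a component of $\overline{\Sigma\sm K_{m-1}}$. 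For property~(2) I would first record the elementary fact that $\del K_m'$ is precisely the union of the boundary components of the various $V'^{(j)}$ other than the curves $\delta'^{(j)}=V'^{(j)}\cap K_{m-1}'$ — these latter curves lie in $\mathrm{int}(K_m')$, being flanked by $K_{m-1}'$ on one side (recall $K_{m-1}'\subset K_m'$ by condition~\ref{it:big} of Lemma~\ref{subsurface}) and by $V'^{(j)}$ on the other — and then invoke the construction in Section~\ref{S:complementary}, where $g_m^{(j)}$ was defined on each such boundary curve $C_i'\times\{1\}$ to be a curve in $\Sigma\sm K_m$.

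Properties~(3) and~(4) — read, as is needed to keep the induction going, for the newly constructed $T_m'$ rather than $T_{m-1}'$ — follow from the description of $T_m'=T_{m-1}'\cup\bigcup_j T_m'^{(j)}$ in Section~\ref{S:complementary}: each $T_m'^{(j)}$ is a union of arcs from the vertex on $\delta'^{(j)}$ to one point on each remaining boundary component of $V'^{(j)}$, so, together with the inductive hypothesis on $T_{m-1}'$, the tree $T_m'$ has exactly one terminal vertex on each component of $\del K_m'$; and the equality $f\vert_{T_m'}=g_m\vert_{T_m'}$ holds because the homotopies of $f$ used to build the maps $g_m^{(j)}$ have pairwise disjoint supports, each contained in $\mathrm{int}(V'^{(j)})$ and hence disjoint from $K_{m-1}'$, so they may be composed and the inductive equality $f\vert_{T_{m-1}'}=g_{m-1}\vert_{T_{m-1}'}$ is preserved.

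The one step requiring genuine care is property~(5): for a terminal vertex $q'\in T_m'\cap\del K_m'$, say on the outer boundary of $V'=V'^{(j)}$, we must upgrade the component-wise statement to $(f\vert_{K_m'})_*=(g_m)_*\co\pi_1(K_m',q')\to\pi_1(\Sigma,q)$ with $q=f(q')=g_m(q')$. Here I would change base-point along $T_m'$ to the vertex $q'^{(j)}\in\delta'^{(j)}$ and apply van Kampen: writing $K_m'$ as the iterated amalgam of $K_{m-1}'$ with the subsurfaces $V'^{(i)}$ along the separating curves $\delta'^{(i)}$, the group $\pi_1(K_m',q'^{(j)})$ is generated by $\pi_1(K_{m-1}',q'^{(j)})$ and the groups $\pi_1(V'^{(i)},\cdot)$; on the first factor $f_*=(g_{m-1})_*$ by property~(5) at stage $m-1$, on each $V'^{(i)}$ factor $f_*=(g_m^{(i)})_*$ by the conclusion of Section~\ref{S:complementary} (obtained there from Lemma~\ref{pi_component_map} together with the change-of-base-point argument along the arcs $\theta_i'$), and $f$ and $g_m$ agree on the connecting tree $T_m'$; hence the two induced maps agree on all of $\pi_1(K_m',q'^{(j)})$, and therefore, changing base-point back along $T_m'$, on $\pi_1(K_m',q')$. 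The main obstacle is thus not a hard argument but rather the consistent matching of base-points and of the amalgamated-product decomposition across the factors — everything else is a direct quotation of the preceding subsections.
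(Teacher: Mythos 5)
Your proposal is correct and matches the paper's (terse) treatment, which simply states the lemma as a summary of the construction in Sections~\ref{ExtendedFilling}--\ref{S:combining} and for property~(5) says only ``the required equalities on the fundamental groups.'' You correctly read the lemma with the index $m$ in place of $m-1$ throughout items (3)--(5) (the paper's statement inherits the notation of the inductive hypothesis, but the intent, confirmed by the sentence following the lemma, is that the new data $K_m'$, $g_m$, $T_m'$ satisfy the conditions of Section~\ref{S:inductive}). Items (1), (2), (3), (4), (6) are indeed direct bookkeeping from the construction as you say, with the implicit (and shared with the paper) assumption that $K_{m-1}'\subset\mathrm{int}(K_m')$ so that $\del K_m'$ consists exactly of the outer boundary circles $C_i'\times\{1\}$ of the $V'^{(j)}$. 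For (5), the paper does not spell out the passage from the component-wise statement $(f\vert_{V'^{(j)}})_*=(g_m^{(j)})_*$ on $\pi_1(V'^{(j)},\cdot)$ to the global statement on $\pi_1(K_m',q')$; your van Kampen argument -- decomposing $\pi_1(K_m')$ as an iterated amalgam of $\pi_1(K_{m-1}')$ and the $\pi_1(V'^{(i)})$ along the $\delta'^{(i)}$, matching base-points along $T_m'$ where $f$ and $g_m$ agree, and using the inductive hypothesis on the $K_{m-1}'$ factor -- is the natural and correct way to fill that gap, and is the argument the authors clearly have in mind.
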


The conditions except~\ref{it:proper} are exactly those of Section~\ref{S:inductive}, i.e., the conditions for the inductive step. 

\subsection{Constructing the proper map}

As $K_m'\supset L_m'$ by Theorem~\ref{subsurface} and the subsurfaces $L_m'$ form an exhaustion, so do the subsurfaces $K_m'$. We define the map $g\co \Sigma'\to \Sigma$ as the direct limit of the maps $g_m\co K_m'\to \Sigma$. 

We claim that $g$ is proper. It suffices to show that $g^{-1}(K_n)$ is compact for  all $n$. We claim that $g^{-1}(K_n)\subset K_{n+1}'$, and hence is a closed subset of a compact space, so compact. Namely, suppose $x\in g^{-1}(K_n)$ and $x\notin K_{n+1}'$. Then $x\in K_{m}'\setminus K_{m-1}'$ for some $m > n + 1$. Hence, $g(x)= g_m(x)\subset \overline{\Sigma\setminus K_{m-1}}$ by statement~(\ref{it:proper}) of Lemma~\ref{construction}, so $g(x)\notin \textrm{int}(K_{m-1})\supset K_{m-2}\supset K_n$ (the last containment by $m>n+1$), a contradiction.

Further, $g_\ast=f_\ast\co\pi_1(\Sigma', p')\to \pi_1(\Sigma, p)$, so $g$ is homotopic to $f$. By~\cite[Theorem 1]{arxiv1}, $g$ is in turn (properly) homotopic to a homeomorphism, and hence so is $f$.

As $g$ commutes with the Goldman bracket, we deduce that $g$ is orientation preserving. Namely, first, modify $g$ by an isotopy to ensure that $g$ is a diffeomorphism (in dimension $2$, homeomorphisms are isotopic, hence properly homotopic to diffeomorphisms). If $g$ is orientation reversing, then as the signs of intersection numbers are reversed, we get the equation $\left[g_*(x'),g_*(y')\right]=-g_*\left([x',y']\right)$. As $f_*=g_*$ and $f_*$ commutes with the Goldman bracket, we conclude that $\left[g_*(x'),g_*(y')\right]=0$ for all $x$, $y$. As $g_*$ is an isomorphism, it follows that the Goldman bracket is trivial on $\Sigma$, a contradiction as $\Sigma$ is assumed not to be a plane or cylinder, so there exist classes $x$ and $y$ with $[x, y]\neq 0$.

This completes the proof of Theorem~\ref{main}.

\section{Proof of Theorem~\ref{intersection}}

We sketch the modifications needed to prove Theorem~\ref{intersection}.  A homeomorphism preserves intersection numbers, so clearly (\ref{it:homeo}) implies (\ref{it:inter}) and (\ref{it:zero-inter}). Further, clearly (\ref{it:inter}) implies (\ref{it:zero-inter}). We show that (\ref{it:zero-inter}) implies (\ref{it:homeo}).

Assume now that, $I_\Sigma\left(f_*(x'), f_*(y')\right)=0 \iff I_{\Sigma'}(x', y')=0$ for all $x', y'\in \widehat\pi(\Sigma')$. We see that the proof of Theorem~\ref{main} goes through with this in place of the hypothesis that $f$ preserves the Goldman bracket. 

Namely, there are only two places where the hypothesis that the Goldman bracket is preserved are used directly:
\begin{enumerate}
  \item In Lemma~\ref{loop_outside} to show that if the closed curve $\gamma'$ can be homotoped to be disjoint from $\alpha'_i$, i.e., $I_{\Sigma'}(\gamma', \alpha'_i)=0$, then $I_\Sigma(\gamma, \alpha_i)=0$ where $\gamma$ is homotopic to $f_*(\gamma')$ and $\alpha_i$ is homotopic to $f_*(\alpha_i')$. Here an additional hypothesis is that $\alpha_i$ is a simple closed curve.
  \item In Lemma~\ref{pi_component_map}, to show that if the closed curve $\gamma'$ can be homotoped to be disjoint from $\beta'$, i.e., $I_{\Sigma'}(\gamma', \beta')=0$, then $I_\Sigma(\gamma, \beta)=0$ where $\gamma$ is homotopic to $f_*(\gamma')$ and $\beta$ is homotopic to $f_*(\beta)$. Here the additional hypothesis is that $\beta'$ is a simple closed curve.
\end{enumerate}

Clearly both these follow under the hypothesis that $I_\Sigma\left(f_*(x'), f_*(y')\right)=0 \iff I_{\Sigma'}(x', y')=0$ for all $x', y'\in \widehat\pi(\Sigma')$. Thus, the proof of Theorem~\ref{main} goes through with this in place of the hypothesis that $f$ preserves the Goldman bracket.
\subsection*{Acknowledgment}
The first and the last-named authors are supported by fellowships from the National Board for Higher Mathematics. We are grateful to the anonymous referee for his careful reading of the paper and his comments and suggestions, which helped considerably in improving the manuscript.
\bibliographystyle{plain}
\bibliography{references.bib}

\end{document}